\newtheorem{theorem}{Theorem}
\newtheorem{corollary}{Corollary}
\newtheorem{assumption}{Assumption}
\newtheorem{remark}{Remark}
\newtheorem{lemma}{Lemma}
\begin{document}
\normalem
\title{MUSIC: Accelerated Convergence for Distributed Optimization With Inexact and Exact Methods}
\author{Mou Wu, \emph{Member, IEEE}, Haibin Liao, Zhengtao Ding, \emph{Senior Member, IEEE}, Yonggang Xiao
\thanks{Mou Wu and Yonggang Xiao are with School of Computer Science and Technology, Hubei University of Science and Technology, Xianning 437100, PR China. (Email: mou.wu@163.com; xygone@gmail.com)}
\thanks{Haibin Liao is with School of Electronic and Electrical Engineering, Wuhan Textile University, Wuhan, China. (Email: liao\_haibing@163.com)}
\thanks{Zhengtao Ding is with the Department of Electrical and Electronic Engineering, The University of Manchester, Manchester M13 9PL, U.K. (E-mail: zhengtao.ding@manchester.ac.uk)}
}

\maketitle
\begin{abstract}
Gradient-type distributed optimization methods have blossomed into one of the most important tools for solving a minimization learning task over a networked agent system. However, only one gradient update per iteration is difficult to achieve a substantive acceleration of convergence. In this paper, we propose an accelerated framework named as MUSIC allowing each agent to perform multiple local updates and a single combination in each iteration. More importantly, we equip inexact and exact distributed optimization methods into this framework, thereby developing two new  algorithms that exhibit accelerated linear convergence and high communication efficiency. Our rigorous convergence analysis reveals  the sources of steady-state errors arising from inexact policies and offers effective solutions. Numerical results based on synthetic and real datasets demonstrate both our theoretical motivations and analysis, as well as performance advantages.

\end{abstract}

\begin{IEEEkeywords}
Distributed optimization, gradient descent, multiple updates, convergence acceleration, machine learning.
\end{IEEEkeywords}
\normalem
\section{Introduction and Motivation}

\IEEEPARstart{D}{istributed} computation for minimizing a sum of convex functions has been motivated by wide applications in engineering and technological domains including sensor and robot networks \cite{jaleel2020distributed}, smart grid\cite{molzahn2017survey}, large scale machine learning \cite{nedic2020distributed} and neural networks \cite{yang2017collaborative}. Instead of seeking a centralized solution, many distributed optimization methods have been proposed to address  such problems. The popular first-order gradient methods include the Distributed Gradient Descent (DGD) \cite{nedic2009distributed,nedic2016stochastic,7963560,nedic2020distributed}, Distributed Nesterov Gradient \cite{li2020distributed,lu2020nesterov}, and Distributed Gradient Tracking \cite{koloskova2021improved,pu2021distributed}. Without exception, the successful implementation of these algorithms depends on two critical steps, i.e., local computations based on a local objective function and input data, and local communications based on information exchange with their immediate neighbors over the underlying network.

By reordering the update and combination steps,  a variant of DGD \eqref{eq3}-\eqref{eq4} structure is the  diffusion-based Adapt-Then-Combine (ATC) method. The asynchrony of two processes in the context of  distributed gradient projection is studied in \cite{9773308}.  First-order DGD/ATC methods  offer significant advantages, including low computational costs and rapid convergence. However, these methods inherit their inexact nature. Specifically, they do not converge to the exact minimizer denoted as $\textbf{x}^{*}$ but rather approach the $O(\alpha)$ or $O(\alpha^{2})$-neighborhood around $\textbf{x}^{*}$ when  a fixed step size $\alpha$ is employed \cite{nedic2009distributed}. Such a steady-state bias leads to inexact convergence. In other words, exact convergence  can be achieved by using a diminishing step size. However, the resulting slow convergence rate becomes unacceptable, both in theoretical and practical context. Therefore, in an inexact setting, a dilemma arises when there is a simultaneous requirement for both accuracy and speed.

Communication cost is another important consideration when designing distributed optimization methods over a networked learning system. Recently, originating from a centralized optimization with some specified considerations, such as data heterogeneity and partial device participation, federated learning \cite{kairouz2021advances,yang2019federated,konevcny2016federated,li2020federated} has introduced  a novel computing paradigm for machine learning in which each agent is allowed to perform multiple local updates before communicating with other neighboring agents.  The resulting benefits  include less communication and faster convergence.

While multiple updates strategy achieves success in the emerging federated learning, it is not clear whether it can provide workable solutions in the distributed optimization setting. Inspired by this motivation, to reach the aim of less communication cost while accelerating convergence, in this paper, we propose the \underline{M}ulti-\underline{U}pdates \underline{SI}ngle-\underline{C}ombination (referred to as MUSIC) framework designed for two gradient-type (i.e., ATC and exact diffusion) distributed optimization methods with inexact and exact estimations to satisfy different accuracy requirements.

\subsection{Related Work}

First-order gradient-based optimization learning methods can be informally classified into three distinct classes: inexact, non-accelerated exact and accelerated exact algorithms.

\textbf{Inexact methods.} Inexact first-order optimization has been studied intensely and a great deal of research works are carried out, such as the well-known DGD method \cite{nedic2009distributed,yuan2016convergence} for undirected  networks, the (sub)gradient-push methods \cite{nedic2014distributed,nedic2016stochastic} for directed  networks and ATC/CTA (Combine-Then-Adapt) for diffusion networks \cite{sayed2014adaptation,sayed2014adaptive}. The corresponding asynchronous and stochastic versions with convergence rate analysis are proposed in \cite{5585721} and \cite{sundhar2010distributed}, respectively. Similar to ATC/CTA, learning-then-consensus (LTC) and consensus-then-learning (CTL) algorithms are proposed with stochastic gradient noises \cite{9406173}. Compared with the centralized gradient method, the distributed methods incurs slower convergence rate. However, in present-day society with extra attention on data privacy, collecting all data in a centralized machine is often unrealistic. Benefiting from low computational cost and algorithmic simplicity, these inexact gradient-type methods have proven to be fundamental and extremely popular. Therefore, they are highly recommended when high precision is not required. On the other hand, achieving faster convergence while maintaining almost the same precision as the existing inexact methods remains an open question.  The heavy-ball and Nesterov's momentum accelerations are studied in an inexact way \cite{9286906}.

\textbf{Non-accelerated exact methods.} Numerous bias-correction methods with a fixed step size have been proposed to address the dilemma between convergence accuracy and speed in the context of inexact solutions. The well-studied \underline{EX}act firs\underline{T}-orde\underline{R} \underline{A}lgorithm (EXTRA) \cite{shi2015extra,li2020revisiting} uses the gradients of last two iterates to address the bias problem in a consensus way as  DGD. A gradient tracking algorithm with variance reduction (GT-VR) is proposed to solve large-scale non-convex finite-sum optimization \cite{9772052}. Instead of exchanging  the estimations from previous two local updates, the \underline{N}etwork \underline{I}n\underline{D}ependent \underline{S}tep-size (NIDS) \cite{li2019decentralized} exchanges the gradient adapted estimations. Different from the EXTRA, the gradient-tracking methods \cite{sun2022distributed,li2020communication}  use current gradient information to track the averaged gradients of the overall objective. The \underline{D}istributed
\underline{I}nexact \underline{G}radient and gradient-track\underline{ing} (DIGing) \cite{nedic2017achieving,sun2022distributed,li2020communication} applies the gradient-tracking technique in  time varying graphs. To obtain better effect of bias-correction, these methods interact more frequently with neighborhood than inexact ones, thereby resulting in more expensive communication. Motivated by the fact that traditional diffusion strategies outperform traditional consensus strategies \cite{sayed2014adaptation,li2022consensus}, exact diffusion \cite{yuan2018exact} is proposed to correct the bias by removing the difference between local and global estimates from the previous iteration. Convergence analysis \cite{yuan2018exact2} shows that exact diffusion has a wider stability range with faster convergence rate than the EXTRA. Influence of bias-correction on distributed stochastic setting is studied in \cite{yuan2020influence}. Nested Exact Alternating Recursion-DGD (NEAR-DGD) \cite{berahas2018balancing} can  converge to an exact consensual solution by balancing communication and computation, but communication amounts is huge to reach this goal.

\textbf{Accelerated exact methods.} The accelerated versions of some exact methods are proposed in \cite{li2020revisiting,li2021accelerated,8812696,kovalev2020optimal}. However, these accelerated methods typically require a meticulous selection of  numerous parameters including the step sizes, and a comprehensive understanding of global knowledge. For example, in the Accelerated Extra \cite{li2020revisiting}, the parameters including the second largest singular value of the combination matrix, convex and smooth coefficients of objective functions must be estimated in advance. Both ACCelerated Gradient Tracking (Acc-GT) \cite{li2021accelerated} and ACCelerated Distributed Nesterov Gradient Descent (Acc-DNGD) \cite{8812696} use four intermediate variables to facilitate  three information exchanges per iteration. Accelerated Proximal Alternating Predictor-Corrector (APAPC) \cite{kovalev2020optimal} requires only one information exchange, however, four auxiliary parameters including the step size need to be set with complex calculations.

\textbf{Multiple updates structure.} The idea of multiple updates in fact is not proposed firstly in  federated learning. One can track the seminal work in the centralized stochastic gradient descent (SGD)  known as local update SGD \cite{mangasarian1995parallel}, which shows faster convergence and less communication by multiple local updating.  Its recent variants \cite{stich2018local,wang2021cooperative,khaled2020tighter} (e.g., local SGD, Periodic Simple-Averaging SGD (PSASGD), Elastic Averaging SGD (EASGD), and decentralized parallel SGD) benefit from such a promising idea that allow workers to perform multiple local updates to the model and then combine the local models periodically. Notably, the well-known federated averaging (FedAvg) algorithm \cite{kairouz2021advances} is a derivative of local SGD, specifically designed for unbalanced participating devices.

\subsection{Contributions and organization}
Theoretically and experimentally, it is confirmed that our method enhances the distributed EASGD method (please see (\ref{eq86}) (\ref{eq87}) to an obvious higher level with better performance. To the best of our knowledge, the proposed local correction technique has never been reported in literature.

Our main contributions and novelties are summarized as follows.
\begin{itemize}
                                  \item To the best of our knowledge, this work is the first one to implement Multi-Updates SIngle-Combination (MUSIC) strategy for solving distributed deterministic optimization problems. As a result, numerous state-of-the-art methods (e.g., exact and inexact, accelerated and non-accelerated, first-order and second-order) can potentially employ such structure to obtain performance improvements due to its visible benefits.
                                  \item Furthermore, the MUSIC-based novel local correction technique noticeably improves the reduction of the error neighborhood size. Both theoretically and experimentally, we have confirmed that our method significantly elevates the performance of the distributed EASGD method (please see \eqref{eq86} and \eqref{eq87}).
                                  \item Moreover, our analysis method provides an intuitive  and rigorous theoretical understanding of how the convergence of MUSIC evolves asymptotically  and its steady-state error compositions. And most particularly, the proof structure is established in a seamless way from inexact MUSIC to exact MUSIC, resulting in a clear performance comparison.
                                      \item Finally, compared to existing methods, whether exact or accelerated, our proposed Exact MUSIC method is simpler yet more effective in terms of acceleration capability, while also offering the best communication complexity. This assertion is substantiated by both theoretical analysis and experimental results.
                                \end{itemize}

The paper is organized as follows. Section \ref{II} reviews relevant preliminaries. The inexact and exact MUSIC methods with convergence analysis and numerical experiments are presented in Section \ref{III} and \ref{IV}, respectively. Section \ref{V} concludes the paper and proposes future work.

\subsection{Notations}
For a better understanding for this work, throughout the paper the involved matrices and vectors are denoted with capital letters and small letters in bold, respectively, while the scalars are denoted in normal font. Specially, $\textbf{x}^{T}$ means the transpose of vector $\textbf{x}$. The operator $\otimes$  denotes the Kronecker product. $\|\cdot\|$ denotes the Euclidean norm of vectors and the spectral norm of matrices. $\lfloor x \rfloor$ denotes the greatest integer not exceeding $x$. The inner product in the Euclidean space is denoted by $\langle\cdot\rangle$. We use the subscript (e.g., $i,j$) and superscript ($t$) to denote the agent and time indexes, respectively.

\section{Preliminaries}\label{II}

In this section,  we briefly review the classical first-order DGD and ATC methods. The target of distributed optimization is to minimize a finite-sum loss of all agents as follows:
\begin{equation}  \label{eq1}
\begin{aligned}
\textbf{x}^{*}=\textrm{arg}\min\limits_{\textbf{x}\in\mathbb{R}^{p}}\sum\limits_{i=1}^{N}f_{i}(\textbf{x}),
\end{aligned}
\end{equation}
where $f_{i}$ is the local objective function held by agent $i$ over a networked system and assumed to be $\mu-$strongly convex  and $L$-smooth. It is noted that the local objective function $f_{i}$ may have different local minimizers denoted by $f_{i}^{*}$ due to the constraint that every agent has different neighborhoods and local datasets. In such distributed topologies, all agents seek to obtain the global solution $\textbf{x}^{*}$ by working cooperatively.

The DGD method for solving (\ref{eq1}) takes the following form:
\begin{equation}  \label{eq2}
\textbf{x}^{t+1}_{i}=\sum\limits_{j\in\mathcal{N}_{i}}w_{ij}\textbf{x}^{t}_{i}-\alpha \nabla f_{i}(\textbf{x}^{t}_{i}),
\end{equation}
where $\nabla$ is the gradient operator, $\textbf{x}^{t}_{i}$ denotes the estimate of an arbitrary agent $i$ at iteration $t$.  The weight $w_{ij}$ held by agent $i$ is used to scale the data that flows from agent $j$ to $i$ with the basic constraints $\sum_{j\in\mathcal{N}_{i}}w_{ij}=1$ and $w_{ij}\geq0$ for any $i$, where $\mathcal{N}_{i}$ is the neighboring set of agent $i$ including itself. Moreover, it is necessary that $w_{ij}=0$ for non-adjacent agents $j\notin \mathcal{N}_{i}$. The formulation \eqref{eq2} can be rewritten in two steps, i.e.,

\begin{align}
\label{eq3} &\textbf{v}^{t}_{i}=\sum\limits_{j\in\mathcal{N}_{i}}w_{ij}\textbf{x}^{t}_{i}, \;\;\;\;\;\textbf{(combine)}\\
\label{eq4} &\textbf{x}^{t+1}_{i}=\textbf{v}^{t}_{i}-\alpha \nabla f_{i}(\textbf{x}^{t}_{i}),\;\;\;\;\;\textbf{(local update)}
\end{align}
where $\textbf{v}^{t}_{i}$ is the aggregated estimate by receiving synchronous estimates from other neighboring agents, while $\textbf{x}^{t}_{i}$ is a local estimate for $\textbf{x}^{*}$ by using a modified gradient descent method. Note that we use the constant step size $\alpha$ for all agents during iterations.

Different from the DGD, the ATC method carries out the following iteration
\begin{align}
\label{eq5} &\textbf{v}^{t+1}_{i}=\textbf{x}^{t}_{i}-\alpha \nabla f_{i}(\textbf{x}^{t}_{i}),\;\;\;\textbf{(local update)}\\
\label{eq6} &\textbf{x}^{t+1}_{i}=\sum\limits_{j\in\mathcal{N}_{i}}w_{ij}\textbf{v}^{t+1}_{i}. \;\;\;\;\;\textbf{(combine)}
\end{align}
Aside from the obvious difference of execution order between \eqref{eq3}-\eqref{eq4} and \eqref{eq5}-\eqref{eq6}, ATC  employs the traditional gradient descent rather than the modified one as \eqref{eq4}. This particular implementation has demonstrated improved precision, with the same level of communication overhead as the DGD \cite{sayed2014adaptation,sayed2014adaptive}.  This advantage arises from incorporating the latest estimates in the gradient computation.

\section{Inexact MUSIC}\label{III}

In this section, we propose the inexact MUSIC, which is a combination  of the inexact ATC and the MUSIC framework. We show that inexact MUSIC exhibits a linear convergence rate faster than that of ATC.

\textbf{Algorithm description.}  Intuitively, the inexact MUSIC algorithm consists of two loop iterations, i.e., intra-agent computation loop and inter-agent communication loop. Here, we denote the total number of iterations as $T$ during the algorithm  with one combination  occurring every $E$ local update steps. Since a communication only occurs during the combination step, the number of communication rounds for each agent is equal to  $\lfloor T/E\rfloor$, where one round means that an agent $i$ sends the current estimate $\textbf{x}_{i}^{t}$ to its neighboring agents and receives $\textbf{x}_{j\in\mathcal{N}_{i}}^{t}$ from them.

Let $\mathcal{I}_{E}$ be the set of combination steps, i.e., $\mathcal{I}_{E}=\{kE|k=0,1,2,\ldots,\lfloor T/E\rfloor\}$. Therefore, there exists a time $t^{0}=kE\leq t$ for any $t>0$ satisfying $t-t^{0}\leq E$. We can describe the inexact MUSIC with the following iteration
\begin{equation}  \label{eq7}
\begin{aligned}
\textbf{x}_{i}^{t+1}=\begin{cases}
\textbf{x}_{i}^{t}-\alpha \nabla f_{i}(\textbf{x}^{t}_{i})  &\textrm{if}\;\; t+1\notin \mathcal{I}_{E} \\
\sum\limits_{j\in \mathcal{N}_{i}}w_{ij}(\textbf{x}_{j}^{t}-\alpha \nabla f_{j}(\textbf{x}^{t}_{j}))  &\textrm{if}\;\; t+1\in \mathcal{I}_{E}
\end{cases},
\end{aligned}
\end{equation}
where the weights $w_{ij}$ is same as in \eqref{eq3} and \eqref{eq6}. The resulting weight matrix $\textbf{W}$ with entry $w_{ij}$ and size $N\times N$ is a doubly stochastic
matrix, i.e., it has non-negative entries and satisfies $\textbf{W}\mathrm{\textbf{1}}_{N}=\mathrm{\textbf{1}}_{N}$ and $\textbf{W}^{T}\mathrm{\textbf{1}}_{N}=\mathrm{\textbf{1}}_{N}$, where $\mathrm{\textbf{1}}_{N}$ is a column vector of size $N$ with all its entries equal to one. Alternative choices of $\textbf{W}$ include Laplacian rule, Metropolis rule and Maximum degree rule \cite{sayed2014adaptation,sayed2014adaptive}. If we introduce the  intermediate variable $\textbf{v}_{i}^{t}$ as \eqref{eq5}, \eqref{eq7} can be rewritten by

\begin{align}
\label{eq8} &\textbf{v}_{i}^{t+1}=\textbf{x}_{i}^{t}-\alpha \nabla f_{i}(\textbf{x}^{t}_{i}),\\
\label{eq9} &\textbf{x}_{i}^{t+1}=\begin{cases}
\textbf{v}_{i}^{t+1}  &\textrm{if}\; t+1\notin \mathcal{I}_{E} \\
\sum\limits_{j\in \mathcal{N}_{i}}w_{ij}\textbf{v}_{j}^{t+1}  & \textrm{if}\;t+1\in \mathcal{I}_{E}
\end{cases},
\end{align}
which are useful for subsequent convergence analysis. In \eqref{eq8}, $\textbf{v}_{i}^{t+1}$ is a single result of gradient descent at $\textbf{x}^{t}_{i}$. It is noted that the inexact MUSIC reduces to the ATC method when $E=1$.  Fig. \ref{fig1} illustrates  the workflow of inexact MUSIC. During the inner (local update) iterations, each agent performs $E$ gradient descent steps as defined in \eqref{eq8}. In the outer (combination) iterations, each agent aggregates estimates from its neighborhood using weighted consensus as described in \eqref{eq9}. When the terminal conditions, such as the expected level of accuracy or the designated number of iterations, are met, the algorithm comes to a halt.

\begin{figure}[htbp]
\centering
\centerline{\includegraphics[width=5.5cm,height=7cm]{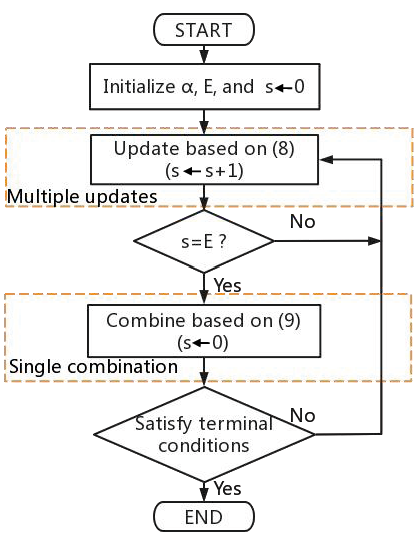}}
\caption{Illustration of workflow in the inexact MUSIC. Note that a temporary variable $s$ is used to control the quantity of local updates.}\label{fig1}
\end{figure}

\subsection{Convergence analysis}\label{IIA}
Before jumping to the convergence analysis, we first introduce the following common assumptions for convex and smooth functions.
\subsubsection{Assumptions and additional notations}

\begin{assumption}\label{assum1}
Local objective function $f_{i}$ is $\mu$-strongly convex:
\begin{equation}  \label{eq10}
f_{i}(\textbf{x})\geq f_{i}(\widehat{\textbf{x}})+(\textbf{x}-\widehat{\textbf{x}})^{T}\nabla f_{i}(\widehat{\textbf{x}})+\frac{\mu}{2}\|\textbf{x}-\widehat{\textbf{x}}\|^{2},
\end{equation}
for any $\textbf{x}$ and $\widehat{\textbf{x}}\in\mathbb{R}^{p}$. Accordingly, it follows from the above
\begin{equation}  \label{eq10-1}
\|\nabla f_{i}(\textbf{x})\|^{2}\geq2\mu(f_{i}(\textbf{x})-f_{i}^{*}).
\end{equation}
\end{assumption}

\begin{assumption} \label{assum2}Local objective function $f_{i}$ is L-smooth:
\begin{equation}  \label{eq11}
f_{i}(\textbf{x})\leq f_{i}(\widehat{\textbf{x}})+(\textbf{x}-\widehat{\textbf{x}})^{T}\nabla f_{i}(\widehat{\textbf{x}})+\frac{L}{2}\|\textbf{x}-\widehat{\textbf{x}}\|^{2},
\end{equation}
for any $\textbf{x}$ and $\widehat{\textbf{x}}\in\mathbb{R}^{p}$. Accordingly, it follows from the above
\begin{equation}  \label{eq12}
\|\nabla f_{i}(\textbf{x})\|^{2}\leq2L(f_{i}(\textbf{x})-f_{i}^{*}).
\end{equation}
\end{assumption}

\begin{assumption}\label{assum3}
Based on \eqref{eq10-1} and \eqref{eq12}, the gradients for $f_{i}$ is bounded: $0\leq G_{min}\leq\|\nabla f_{i}(\textbf{x}_{i}^{t})\|\leq G_{max}$  for all $i=1,\ldots,N$ and $t=1,\ldots,T$.
\end{assumption}

Assumptions \ref{assum1} and \ref{assum2} are generally applicable when the local objective function $f_{i}$ is $\mu$-strongly convex and L-smooth \cite{nesterov2018lectures,8812696}. Assumption \ref{assum3} on bounded gradients is a common requirement in numerous distributed optimization results \cite{xi2016distributed,simonetto2017decentralized,simonetto2016class}. For notational convenience, we introduce the following quantities that are used in our analysis:
$$\overline{\textbf{v}}_{i}^{t}=\sum\limits_{j=1}^{N}w_{ij}\textbf{v}_{j}^{t},\;\;\;\overline{\textbf{x}}_{i}^{t}=\sum\limits_{j=1}^{N}w_{ij}\textbf{x}_{j}^{t},$$
and
$$\overline{\textbf{g}}_{i}^{t}=\sum\limits_{j=1}^{N}w_{ij}\nabla f_{j}(\textbf{x}_{j}^{t}).$$

\subsubsection{Key lemmas}
Here, we present several key lemmas in order to establish the general dynamical system related to network optimality gap $\|\overline{\textbf{v}}_{i}^{t}-\textbf{x}^{*}\|^{2}$. Firstly, we obtain the bounded result of one step gradient descent \eqref{eq8}, which provides an important relation for later use.

\begin{lemma}\label{lem1}\textbf{(One step gradient descent)}
Under Assumptions \ref{assum1} and \ref{assum2}, if the step size $\alpha$ satisfies $\alpha\leq\frac{1}{2L}$ for one step gradient descent \eqref{eq8} of the inexact MUSIC \eqref{eq8}-\eqref{eq9}, we have
\begin{equation}  \label{eq13}
\begin{aligned}
\parallel\overline{\textbf{v}}_{i}^{t+1}-\textbf{x}^{*}\parallel^{2}\leq&(1-\mu\alpha)\|\overline{\textbf{x}}_{i}^{t}-\textbf{x}^{*}\|^{2}+\sum\limits_{j=1}^{N}w_{ij}\|\textbf{x}_{j}^{t}-\overline{\textbf{x}}_{i}^{t}\|^{2}\\
&+\gamma\sum\limits_{j=1}^{N}w_{ij}\|\textbf{x}_{j}^{t}-\overline{\textbf{x}}_{j}^{t}\|^{2}+2\alpha\tau,
\end{aligned}
\end{equation}
where $f_{i}(\textbf{x}^{*})-f_{i}^{*}\leq\tau$ for any agent $i$, $\gamma=\frac{\alpha(1-2L\alpha)}{\pi}$ and $0<\pi<\frac{1}{L}$.
\end{lemma}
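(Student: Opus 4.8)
The plan is to establish \eqref{eq13} by starting from the standard one-step gradient descent expansion applied to the averaged iterate, and then carefully decomposing the gradient term into a ``centered'' part (evaluated near the network average) plus consensus-error corrections. First I would define, for each agent $i$, the averaged iterate $\overline{\textbf{x}}_{i}^{t}=\sum_j w_{ij}\textbf{x}_j^t$ and the averaged descent step $\overline{\textbf{v}}_{i}^{t+1}=\sum_j w_{ij}\textbf{v}_j^{t+1}=\sum_j w_{ij}(\textbf{x}_j^t-\alpha\nabla f_j(\textbf{x}_j^t))=\overline{\textbf{x}}_{i}^{t}-\alpha\overline{\textbf{g}}_{i}^{t}$, using linearity of the update \eqref{eq8} and the notation introduced just before the lemma. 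Then I would write
\[
\|\overline{\textbf{v}}_{i}^{t+1}-\textbf{x}^*\|^2=\|\overline{\textbf{x}}_{i}^{t}-\textbf{x}^*\|^2-2\alpha\langle\overline{\textbf{g}}_{i}^{t},\overline{\textbf{x}}_{i}^{t}-\textbf{x}^*\rangle+\alpha^2\|\overline{\textbf{g}}_{i}^{t}\|^2,
\]
and the work is to bound the cross term and the squared-gradient term.

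For the squared-gradient term, by convexity of $\|\cdot\|^2$ and then $L$-smoothness in the form \eqref{eq12}, $\|\overline{\textbf{g}}_{i}^{t}\|^2\le\sum_j w_{ij}\|\nabla f_j(\textbf{x}_j^t)\|^2\le 2L\sum_j w_{ij}(f_j(\textbf{x}_j^t)-f_j^*)$. For the cross term, I expect to split $\langle\nabla f_j(\textbf{x}_j^t),\overline{\textbf{x}}_{i}^{t}-\textbf{x}^*\rangle=\langle\nabla f_j(\textbf{x}_j^t),\textbf{x}_j^t-\textbf{x}^*\rangle+\langle\nabla f_j(\textbf{x}_j^t),\overline{\textbf{x}}_{i}^{t}-\textbf{x}_j^t\rangle$. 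The first piece is handled by $\mu$-strong convexity \eqref{eq10} (to produce the $-\mu\alpha\|\overline{\textbf{x}}_{i}^{t}-\textbf{x}^*\|^2$ contraction, after combining across $j$ and using the averaging weights, together with a term $f_j(\textbf{x}_j^t)-f_j(\textbf{x}^*)$ and a $-\tfrac{\mu}{2}\|\textbf{x}_j^t-\textbf{x}^*\|^2$ term that must be massaged). The second piece, the inner product of a gradient with a consensus residual $\overline{\textbf{x}}_{i}^{t}-\textbf{x}_j^t$, I would bound via Young's inequality $\langle a,b\rangle\le\tfrac{\pi}{2}\|a\|^2+\tfrac{1}{2\pi}\|b\|^2$ with the free parameter $0<\pi<1/L$ (this is where $\pi$ enters), turning it into $\tfrac{\pi}{2}\|\nabla f_j(\textbf{x}_j^t)\|^2+\tfrac{1}{2\pi}\|\overline{\textbf{x}}_{i}^{t}-\textbf{x}_j^t\|^2$, then again applying \eqref{eq12} to the gradient norm.

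The main obstacle — and the step I would budget the most care for — is the bookkeeping that makes all the $f_j(\textbf{x}_j^t)-f_j^*$ and $f_j(\textbf{x}_j^t)-f_j(\textbf{x}^*)$ terms cancel or collapse into the single residual constant $\tau$ with the bound $f_i(\textbf{x}^*)-f_i^*\le\tau$. Concretely, after collecting terms the function-value contributions should appear in a combination like $-2\alpha(f_j(\textbf{x}_j^t)-f_j^*)+ (\alpha^2\cdot 2L + \alpha\pi L)(f_j(\textbf{x}_j^t)-f_j^*) \le 0$, which forces the step-size condition $\alpha\le\tfrac{1}{2L}$ together with the constraint on $\pi$, leaving only $2\alpha\sum_j w_{ij}(f_j(\textbf{x}^*)-f_j^*)\le 2\alpha\tau$ (using double stochasticity $\sum_j w_{ij}=1$). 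Finally I would split the consensus residual $\|\overline{\textbf{x}}_{i}^{t}-\textbf{x}_j^t\|^2$ into the two forms appearing on the right-hand side of \eqref{eq13} — one weighted by $w_{ij}$ directly and one carrying the coefficient $\gamma=\tfrac{\alpha(1-2L\alpha)}{\pi}$ — by identifying which residuals come from the $\alpha^2\|\overline{\textbf{g}}_i^t\|^2$ / Young's-inequality branch (these get the $\gamma$ factor) versus which come from rewriting $\|\overline{\textbf{x}}_i^t-\textbf{x}^*\|^2$ relative to $\|\textbf{x}_j^t-\textbf{x}^*\|^2$. Keeping the two distinct consensus-gap terms $\|\textbf{x}_j^t-\overline{\textbf{x}}_i^t\|^2$ and $\|\textbf{x}_j^t-\overline{\textbf{x}}_j^t\|^2$ straight (note the different averaging centers $\overline{\textbf{x}}_i^t$ vs.\ $\overline{\textbf{x}}_j^t$) is the delicate part; everything else is routine convexity and Young's-inequality manipulation.
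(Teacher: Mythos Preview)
Your overall architecture --- expand $\|\overline{\textbf{v}}_{i}^{t+1}-\textbf{x}^{*}\|^{2}$, bound $\alpha^{2}\|\overline{\textbf{g}}_{i}^{t}\|^{2}$ via Jensen and $L$-smoothness, split the cross term through $\textbf{x}_{j}^{t}$, apply strong convexity on $\langle\textbf{g}_{j}^{t},\textbf{x}_{j}^{t}-\textbf{x}^{*}\rangle$ and Young on $\langle\textbf{g}_{j}^{t},\overline{\textbf{x}}_{i}^{t}-\textbf{x}_{j}^{t}\rangle$ --- is exactly the paper's route. The gap is in how you account for the two \emph{different} consensus residuals and where $\pi$ actually enters.

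In the paper, the Young parameter at the cross-term step is $\alpha$, not $\pi$: one writes $-2\alpha\langle\overline{\textbf{x}}_{i}^{t}-\textbf{x}_{j}^{t},\textbf{g}_{j}^{t}\rangle\le \|\overline{\textbf{x}}_{i}^{t}-\textbf{x}_{j}^{t}\|^{2}+\alpha^{2}\|\textbf{g}_{j}^{t}\|^{2}$, which is what puts the coefficient $1$ (not $\alpha/\pi$) on $\sum_{j}w_{ij}\|\textbf{x}_{j}^{t}-\overline{\textbf{x}}_{i}^{t}\|^{2}$. After this, the collected function-value piece is $B=2\alpha(2L\alpha-1)\,C+2\alpha\tau$ with $C=\sum_{j}w_{ij}(f_{j}(\textbf{x}_{j}^{t})-f_{j}^{*})$ and $2L\alpha-1\le 0$. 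The parameter $\pi$ and the second residual enter only now, in \emph{lower-bounding} $C$: the paper writes $f_{j}(\textbf{x}_{j}^{t})-f_{j}^{*}=[f_{j}(\textbf{x}_{j}^{t})-f_{j}(\overline{\textbf{x}}_{j}^{t})]+[f_{j}(\overline{\textbf{x}}_{j}^{t})-f_{j}^{*}]$, applies convexity, Young with parameter $\pi$, and $L$-smoothness (needing $\pi<1/L$) to get $C\ge -\tfrac{1}{2\pi}\sum_{j}w_{ij}\|\textbf{x}_{j}^{t}-\overline{\textbf{x}}_{j}^{t}\|^{2}$. Multiplying by the negative coefficient $2\alpha(2L\alpha-1)$ flips the sign and yields exactly $\gamma\sum_{j}w_{ij}\|\textbf{x}_{j}^{t}-\overline{\textbf{x}}_{j}^{t}\|^{2}$ with $\gamma=\alpha(1-2L\alpha)/\pi$. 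This is the \emph{only} place the $j$-centered average $\overline{\textbf{x}}_{j}^{t}$ appears; it does not come from ``splitting'' $\|\textbf{x}_{j}^{t}-\overline{\textbf{x}}_{i}^{t}\|^{2}$, and the branch $\alpha^{2}\|\overline{\textbf{g}}_{i}^{t}\|^{2}$ produces no residual terms at all. Your plan to make the function-value combination simply $\le 0$ bypasses this step entirely, so your outlined argument has no mechanism to generate the $\gamma\sum_{j}w_{ij}\|\textbf{x}_{j}^{t}-\overline{\textbf{x}}_{j}^{t}\|^{2}$ term with the stated constants.

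(As an aside: since $C\ge 0$ trivially, one in fact has $B\le 2\alpha\tau$, so the $\gamma$-term in \eqref{eq13} is nonnegative slack. If you set your Young parameter to $\alpha$ rather than $\pi$, your route would give the bound without the $\gamma$-term, which \emph{implies} \eqref{eq13}. But as written, with $\pi$ as a free parameter in the cross-term step, you neither recover the coefficient $1$ on the first residual nor produce the second one.)
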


\begin{proof}

Based on \eqref{eq8} and the definitions of $\overline{\textbf{v}}_{i}^{t}$, $\overline{\textbf{x}}_{i}^{t}$ and $\overline{\textbf{g}}_{i}^{t}$, we have
\begin{equation}  \label{eq14}
\begin{aligned}
\parallel\overline{\textbf{v}}_{i}^{t+1}-&\textbf{x}^{*}\parallel^{2}=\parallel\overline{\textbf{x}}_{i}^{t}-\alpha\overline{\textbf{g}}_{i}^{t}-\textbf{x}^{*}\parallel^{2}\\
&=\parallel\overline{\textbf{x}}_{i}^{t}-\textbf{x}^{*}\parallel^{2}\underbrace{-2\alpha\langle\overline{\textbf{x}}_{i}^{t}-\textbf{x}^{*},\overline{\textbf{g}}_{i}^{t}\rangle}\limits_{A_{1}}+\underbrace{\alpha^{2}\parallel\overline{\textbf{g}}_{i}^{t}\parallel^{2}}\limits_{A_{2}}.
\end{aligned}
\end{equation}

We first bound $A_{2}$ as
\begin{equation}  \label{eq15}
\begin{aligned}
A_{2}&=\alpha^{2}\parallel\overline{\textbf{g}}_{i}^{t}\parallel^{2}=\alpha^{2}\bigg\|\sum\limits_{j=1}^{N}w_{ij}\nabla f_{j}(\textbf{x}_{j}^{t})\bigg\|^{2}\\
&\leq\alpha^{2}\sum\limits_{j=1}^{N}w_{ij}\|\nabla f_{j}(\textbf{x}_{j}^{t})\|^{2}\\
&\leq2L\alpha^{2}\sum\limits_{j=1}^{N}w_{ij}(f_{j}(\textbf{x}_{j}^{t})-f_{j}^{*}),
\end{aligned}
\end{equation}
where the first inequality arises from the convexity of $f_{j}$, the last inequality is based on the $L$-smoothness of $f_{j}$.

To bound $A_{1}$, we have
\begin{equation}  \label{eq16}
\begin{aligned}
A_{1}&=-2\alpha\langle\overline{\textbf{x}}_{i}^{t}-\textbf{x}^{*},\overline{\textbf{g}}_{i}^{t}\rangle\\
&=-2\alpha\langle\overline{\textbf{x}}_{i}^{t}-\textbf{x}^{*},\sum\limits_{j=1}^{N}w_{ij}\textbf{g}_{j}^{t}\rangle\\
&=-2\alpha\sum\limits_{j=1}^{N}w_{ij}\langle\overline{\textbf{x}}_{i}^{t}-\textbf{x}^{*},\textbf{g}_{j}^{t}\rangle\\
&=-2\alpha\sum\limits_{j=1}^{N}w_{ij}\langle\overline{\textbf{x}}_{i}^{t}-\textbf{x}_{j}^{t}+\textbf{x}_{j}^{t}-\textbf{x}^{*},\textbf{g}_{j}^{t}\rangle\\
&=-2\alpha\sum\limits_{j=1}^{N}w_{ij}\langle\overline{\textbf{x}}_{i}^{t}-\textbf{x}_{j}^{t},\textbf{g}_{j}^{t}\rangle-2\alpha\sum\limits_{j=1}^{N}w_{ij}\langle\textbf{x}_{j}^{t}-\textbf{x}^{*},\textbf{g}_{j}^{t}\rangle,
\end{aligned}
\end{equation}
where we use $\textbf{g}_{j}^{t}\triangleq\nabla f_{j}(\textbf{x}_{j}^{t})$ in the second equality.

By $\mu$-strong convexity, we have
\begin{equation}  \label{eq17}
\begin{aligned}
-\langle\textbf{x}_{j}^{t}-\textbf{x}^{*},\textbf{g}_{j}^{t}\rangle\leq -(f_{j}(\textbf{x}_{j}^{t})-f_{j}(\textbf{x}^{*}))-\frac{\mu}{2}\|\textbf{x}_{j}^{t}-\textbf{x}^{*}\|^{2}.
\end{aligned}
\end{equation}
By AM-GM inequality, it is known that $\pm2\langle\textbf{a},\textbf{b}\rangle\leq\alpha\|\textbf{a}\|^{2}+\alpha^{-1}\|\textbf{b}\|^{2}$ for any vectors $\textbf{a}$ and $\textbf{b}$. Thus, we have
\begin{equation}  \label{eq18}
\begin{aligned}
-2\langle\overline{\textbf{x}}_{i}^{t}-\textbf{x}_{j}^{t},\textbf{g}_{j}^{t}\rangle\leq\alpha^{-1}\|\overline{\textbf{x}}_{i}^{t}-\textbf{x}_{j}^{t}\|^{2}+\alpha \|\textbf{g}_{j}^{t}\|^{2}.
\end{aligned}
\end{equation}

Substituting \eqref{eq17} and \eqref{eq18} into \eqref{eq16}, it follows that
\begin{equation}  \label{eq19}
\begin{aligned}
&A_{1}+A_{2}\leq A_{2}+\alpha\sum\limits_{j=1}^{N}w_{ij}\bigg(\frac{1}{\alpha}\|\overline{\textbf{x}}_{i}^{t}-\textbf{x}_{j}^{t}\|^{2}+\alpha\|\textbf{g}_{j}^{t}\|^{2}\bigg)\\
&\;\;-2\alpha\sum\limits_{j=1}^{N}w_{ij}\bigg(f_{j}(\textbf{x}_{j}^{t})-f_{j}(\textbf{x}^{*})+\frac{\mu}{2}\|\textbf{x}_{j}^{t}-\textbf{x}^{*}\|^{2}\bigg)\\
&\leq-\mu\alpha\|\overline{\textbf{x}}_{i}^{t}-\textbf{x}^{*}\|^{2}+\sum\limits_{j=1}^{N}w_{ij}\|\overline{\textbf{x}}_{i}^{t}-\textbf{x}_{j}^{t}\|^{2}\\
&+\underbrace{2\alpha\sum\limits_{j=1}^{N}w_{ij}\big[2L\alpha(f_{j}(\textbf{x}_{j}^{t})-f_{j}^{*})-(f_{j}(\textbf{x}_{j}^{t})-f_{j}(\textbf{x}^{*}))\big]}\limits_{B},
\end{aligned}
\end{equation}
where we use the fact of $\|\overline{\textbf{x}}_{i}^{t}-\textbf{x}^{*}\|^{2}=\bigg\|\sum\limits_{j=1}^{N}w_{ij}(\textbf{x}_{j}^{t}-\textbf{x}^{*})\bigg\|^{2}\leq\sum\limits_{j=1}^{N}w_{ij}\|\textbf{x}_{j}^{t}-\textbf{x}^{*}\|^{2}$ and the boundness \eqref{eq14} of $A_{2}$.

Following the definition of $\tau$, we rewrite $B$ as
\begin{equation}  \label{eq20}
\begin{aligned}
B&=2\alpha\sum\limits_{j=1}^{N}w_{ij}\big[(2L\alpha-1)(f_{j}(\textbf{x}_{j}^{t})-f_{j}^{*})+(f_{j}(\textbf{x}^{*})-f_{j}^{*})\big]\\
&\leq2\alpha(2L\alpha-1)\underbrace{\sum\limits_{j=1}^{N}w_{ij}(f_{j}(\textbf{x}_{j}^{t})-f_{j}^{*})}\limits_{C}+2\alpha\tau
\end{aligned}
\end{equation}
Next, to bound $C$, we have
\begin{equation}  \label{eq21}
\begin{aligned}
C&=\sum\limits_{j=1}^{N}w_{ij}(f_{j}(\textbf{x}_{j}^{t})-f_{j}^{*})\\
&=\sum\limits_{j=1}^{N}w_{ij}\big[\big(f_{j}(\textbf{x}_{j}^{t})-f_{j}(\overline{\textbf{x}}_{j}^{t})\big)+\big(f_{j}(\overline{\textbf{x}}_{j}^{t})-f_{j}^{*}\big)\big]\\
&\geq\sum\limits_{j=1}^{N}w_{ij}\big[\langle\nabla f_{j}(\overline{\textbf{x}}_{j}^{t}),\textbf{x}_{j}^{t}-\overline{\textbf{x}}_{j}^{t}\rangle+\big(f_{j}(\overline{\textbf{x}}_{j}^{t})-f_{j}^{*}\big)\big]\\
&\geq-\frac{1}{2}\sum\limits_{j=1}^{N}w_{ij}\big[\pi\|\nabla f_{j}(\overline{\textbf{x}}_{j}^{t})\|^{2}+\frac{1}{\pi}\|\textbf{x}_{j}^{t}-\overline{\textbf{x}}_{j}^{t}\|^{2}\big]\\
&\;\;+\sum\limits_{j=1}^{N}w_{ij}\big(f_{j}(\overline{\textbf{x}}_{j}^{t})-f_{j}^{*}\big)\\
&\geq-\sum\limits_{j=1}^{N}w_{ij}\big[L\pi(f_{j}(\overline{\textbf{x}}_{j}^{t})-f_{j}^{*})+\frac{1}{2\pi}\|\textbf{x}_{j}^{t}-\overline{\textbf{x}}_{j}^{t}\|^{2}\big]\\
&\;\;+\sum\limits_{j=1}^{N}w_{ij}\big(f_{j}(\overline{\textbf{x}}_{j}^{t})-f_{j}^{*}\big)\\
&\geq-\sum\limits_{j=1}^{N}w_{ij}\big[(L\pi-1)(f_{j}(\overline{\textbf{x}}_{j}^{t})-f_{j}^{*})+\frac{1}{2\pi}\|\textbf{x}_{j}^{t}-\overline{\textbf{x}}_{j}^{t}\|^{2}\big]
\end{aligned}
\end{equation}
where the first inequality is based on the convexity of $f_{j}$, the second inequality follows from the fact of $2\langle\textbf{a},\textbf{b}\rangle\geq-\pi\|\textbf{a}\|^{2}+\pi^{-1}\|\textbf{b}\|^{2}$ for any vectors $\textbf{a}$ and $\textbf{b}$, and $\pi>0$. In the third inequality, we use the L-smooth assumption \ref{assum2} of $f_{j}$. If the condition of $L\pi-1<0$ (i.e., $\pi<\frac{1}{L}$) is satisfied, and by the fact of $f_{j}(\overline{\textbf{x}}_{j}^{t})-f_{j}^{*}\geq0$, the quantity $C$ can be further bounded by
\begin{equation}  \label{eq22}
\begin{aligned}
C\geq-\frac{1}{2\pi}\sum\limits_{j=1}^{N}w_{ij}\|\textbf{x}_{j}^{t}-\overline{\textbf{x}}_{j}^{t}\|^{2}.
\end{aligned}
\end{equation}
Due to $\alpha\leq\frac{1}{2L}$, substituting \eqref{eq22} into \eqref{eq20}, we have
\begin{equation}  \label{eq23}
\begin{aligned}
B&\leq\gamma\sum\limits_{j=1}^{N}w_{ij}\|\textbf{x}_{j}^{t}-\overline{\textbf{x}}_{j}^{t}\|^{2}+2\alpha\tau,
\end{aligned}
\end{equation}
which leads to the result \eqref{eq13} by substituting \eqref{eq19} and \eqref{eq23} into \eqref{eq14}.

\end{proof}

Next, we bound the second and third terms of right hand of inequality \eqref{eq13}.

\begin{lemma}\label{lem2}\textbf{(Bounded deviation $\|\textbf{x}_{j}^{t}-\overline{\textbf{x}}_{j}^{t}\|$)}
Under Assumption \ref{assum3}, for the inexact MUSIC \eqref{eq8}-\eqref{eq9}, $\|\textbf{x}_{j}^{t}-\overline{\textbf{x}}_{j}^{t}\|$ is bounded as
\begin{equation}  \label{eq24}
\begin{aligned}
\|\textbf{x}_{j}^{t}-\overline{\textbf{x}}_{j}^{t}\|\leq2\alpha (t-t^{0})G_{max},
\end{aligned}
\end{equation}
where $t^{0}\in\mathcal{I}_{E}$ means one combination time and satisfies $0\leq t-t^{0}\leq E-1$ for any $t$.
\end{lemma}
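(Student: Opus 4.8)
The plan is to exploit the simple fact that, between two consecutive elements of $\mathcal{I}_E$, the recursion \eqref{eq8}--\eqref{eq9} degenerates into ordinary local gradient descent, so the running disagreement $\textbf{x}_j^{t}-\overline{\textbf{x}}_j^{t}$ can grow by no more than one gradient step per iteration. Concretely, fix $t$, let $t^{0}=\lfloor t/E\rfloor E\in\mathcal{I}_E$ be the most recent combination time (so $0\le t-t^{0}\le E-1$), and note that every index $s$ with $t^{0}<s\le t$ lies strictly between $t^{0}$ and $t^{0}+E$ and is therefore not in $\mathcal{I}_E$; hence the second branch of \eqref{eq9} is inactive and $\textbf{x}_j^{s}=\textbf{v}_j^{s}=\textbf{x}_j^{s-1}-\alpha\nabla f_j(\textbf{x}_j^{s-1})$ for all such $s$.

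Telescoping this identity from $t^{0}$ to $t$ gives $\textbf{x}_j^{t}=\textbf{x}_j^{t^{0}}-\alpha\sum_{s=t^{0}}^{t-1}\nabla f_j(\textbf{x}_j^{s})$, so the triangle inequality together with the bounded-gradient Assumption \ref{assum3} yields $\|\textbf{x}_j^{t}-\textbf{x}_j^{t^{0}}\|\le\alpha(t-t^{0})G_{max}$. Averaging the same identity against the stochastic weights $w_{jk}$ (using $\sum_k w_{jk}=1$) gives $\overline{\textbf{x}}_j^{t}=\overline{\textbf{x}}_j^{t^{0}}-\alpha\sum_{s=t^{0}}^{t-1}\sum_k w_{jk}\nabla f_k(\textbf{x}_k^{s})$, and convexity of the norm produces the same bound $\|\overline{\textbf{x}}_j^{t}-\overline{\textbf{x}}_j^{t^{0}}\|\le\alpha(t-t^{0})G_{max}$. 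Writing $\textbf{x}_j^{t}-\overline{\textbf{x}}_j^{t}=(\textbf{x}_j^{t^{0}}-\overline{\textbf{x}}_j^{t^{0}})+(\textbf{x}_j^{t}-\textbf{x}_j^{t^{0}})-(\overline{\textbf{x}}_j^{t}-\overline{\textbf{x}}_j^{t^{0}})$ and applying the triangle inequality then gives $\|\textbf{x}_j^{t}-\overline{\textbf{x}}_j^{t}\|\le\|\textbf{x}_j^{t^{0}}-\overline{\textbf{x}}_j^{t^{0}}\|+2\alpha(t-t^{0})G_{max}$.

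It remains to dispose of the leading term, i.e. to argue that the disagreement is essentially zero at the combination instant $t^{0}$, which is exactly the step I expect to need the most care. The cleanest route is to invoke the standing convention that all agents are initialized at a common point, so that $\textbf{x}_j^{0}=\overline{\textbf{x}}_j^{0}$, and to read \eqref{eq24} as a within-round estimate measured from that synchronized configuration; note that at $t^{0}$ one has $\textbf{x}_j^{t^{0}}=\sum_k w_{jk}\textbf{v}_k^{t^{0}}$, so it is the combination step that collapses the disagreement. If exact resynchronization at every $t^{0}$ is not granted, the honest version of the bound carries an additive residual $\|\textbf{x}_j^{t^{0}}-\overline{\textbf{x}}_j^{t^{0}}\|$ inherited from the previous round, which one would then control recursively via the contractivity of $\textbf{W}$; in either case the gradient-telescoping part is routine and this bookkeeping at the combination time is the only genuine obstacle.
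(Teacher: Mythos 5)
Your telescoping argument is exactly the paper's: between combination times the recursion is plain local gradient descent, so $\|\textbf{x}_j^{t}-\textbf{x}_j^{t^{0}}\|\leq\alpha(t-t^{0})G_{max}$ and, after averaging with the weights, $\|\overline{\textbf{x}}_j^{t}-\overline{\textbf{x}}_j^{t^{0}}\|\leq\alpha(t-t^{0})G_{max}$ (your index range $s=t^{0},\ldots,t-1$ is in fact the careful version of what the paper writes). The decompositions differ only cosmetically: the paper writes $\textbf{x}_j^{t}-\overline{\textbf{x}}_j^{t}=(\textbf{x}_j^{t}-\textbf{x}_j^{t^{0}})+(\textbf{x}_j^{t^{0}}-\overline{\textbf{x}}_j^{t})$ and yours inserts $\overline{\textbf{x}}_j^{t^{0}}$ explicitly, but both reduce the lemma to the same residual term $\|\textbf{x}_j^{t^{0}}-\overline{\textbf{x}}_j^{t^{0}}\|$.

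That residual is where your proposal stops short. The paper closes it in one line by asserting $\textbf{x}_j^{t^{0}}=\overline{\textbf{x}}_j^{t^{0}}$ at every combination instant ``based on the combination policy \eqref{eq9}'', which both kills the leading term in your final inequality and settles the base case $t=t^{0}$. You explicitly decline to commit to this identity and instead offer two substitutes, neither of which delivers the stated bound: common initialization only synchronizes the round $k=0$ (after the first combination the agents generally disagree again), and carrying a recursively controlled residual $\|\textbf{x}_j^{t^{0}}-\overline{\textbf{x}}_j^{t^{0}}\|$ changes the conclusion from \eqref{eq24} to \eqref{eq24} plus an additive constant. So relative to the paper's proof the one missing ingredient is precisely the claim you flagged as the obstacle. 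Your hesitation is not unreasonable---the combination step gives $\textbf{x}^{t^{0}}=\textbf{W}\textbf{v}^{t^{0}}$ while $\overline{\textbf{x}}^{t^{0}}=\textbf{W}^{2}\textbf{v}^{t^{0}}$, so the identity the paper invokes is not a tautology---but to reproduce the lemma as stated you must either adopt that identity as the paper does or restate the bound with the residual term you describe.
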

\begin{proof}

Firstly, in the case of $t=t^{0}$, the bound \eqref{eq24} always true due to $\textbf{x}_{j}^{t^{0}}=\overline{\textbf{x}}_{j}^{t^{0}}$ based on the combination policy \eqref{eq9}. Secondly, we can write $\|\textbf{x}_{j}^{t}-\overline{\textbf{x}}_{j}^{t}\|$ as
\begin{equation}  \label{eq25}
\begin{aligned}
\|\textbf{x}_{j}^{t}-\overline{\textbf{x}}_{j}^{t}\|&=\|\textbf{x}_{j}^{t}-\textbf{x}_{j}^{t^{0}}+\textbf{x}_{j}^{t^{0}}-\overline{\textbf{x}}_{j}^{t}\|\\
&\leq\|\textbf{x}_{j}^{t}-\textbf{x}_{j}^{t^{0}}\|+\|\overline{\textbf{x}}_{j}^{t}-\textbf{x}_{j}^{t^{0}}\|.
\end{aligned}
\end{equation}
For the inner loop iterations from $t^{0}$ to $t\leq t^{0}+E-1$, we have

\begin{equation}  \label{eq26}
\begin{aligned}
&\textbf{x}_{j}^{t^{0}+1}=\textbf{x}_{j}^{t^{0}}-\alpha\nabla f_{j}(\textbf{x}_{j}^{t^{0}}),\\
&\textbf{x}_{j}^{t^{0}+2}=\textbf{x}_{j}^{t^{0}+1}-\alpha\nabla f_{j}(\textbf{x}_{j}^{t^{0}+1}),\\
&\;\;\;\;\;\;\;\;\;\;\vdots\\
&\textbf{x}_{j}^{t}=\textbf{x}_{j}^{t-1}-\alpha\nabla f_{j}(\textbf{x}_{j}^{t-1}).
\end{aligned}
\end{equation}
Summing over \eqref{eq26} gives
\begin{equation}  \label{eq27}
\begin{aligned}
\textbf{x}_{j}^{t}-\textbf{x}_{j}^{t^{0}}=-\alpha\sum\limits_{s=t^{0}}^{t}\nabla f_{j}(\textbf{x}_{j}^{s}).
\end{aligned}
\end{equation}
Based on Assumption \ref{assum3}, hence we have
\begin{equation}  \label{eq28}
\begin{aligned}
\big\|\textbf{x}_{j}^{t}-\textbf{x}_{j}^{t^{0}}\big\|=\bigg\|\alpha\sum\limits_{s=t^{0}}^{t}\nabla f_{j}(\textbf{x}_{j}^{s})\bigg\|&\leq\alpha(t-t^{0})G_{max}.
\end{aligned}
\end{equation}

By making weighted summation on \eqref{eq26}, it follows that
\begin{equation}  \label{eq29}
\begin{aligned}
&\overline{\textbf{x}}_{j}^{t^{0}+1}=\overline{\textbf{x}}_{j}^{t^{0}}-\alpha\sum\limits_{l=1}^{N}w_{jl}\nabla f_{l}(\textbf{x}_{l}^{t^{0}}),\\
&\overline{\textbf{x}}_{j}^{t^{0}+2}=\overline{\textbf{x}}_{j}^{t^{0}+1}-\alpha\sum\limits_{l=1}^{N}w_{jl}\nabla f_{l}(\textbf{x}_{l}^{t^{0}+1}),\\
&\;\;\;\;\;\;\;\;\;\;\vdots\\
&\overline{\textbf{x}}_{j}^{t}=\overline{\textbf{x}}_{j}^{t-1}-\alpha\sum\limits_{l=1}^{N}w_{jl}\nabla f_{l}(\textbf{x}_{l}^{t-1}).
\end{aligned}
\end{equation}
In the same summing and 2-norm way, we obtain similarly the upper bound
\begin{equation}  \label{eq30}
\begin{aligned}
\big\|\overline{\textbf{x}}_{j}^{t}-\overline{\textbf{x}}_{j}^{t^{0}}\big\|&=\bigg\|\alpha\sum\limits_{s=t^{0}}^{t}\sum\limits_{l=1}^{N}w_{jl}\nabla f_{l}(\textbf{x}_{l}^{s})\bigg\|\\
&\leq\alpha\sum\limits_{s=t^{0}}^{t}\sum\limits_{l=1}^{N}w_{jl}\big\|\nabla f_{l}(\textbf{x}_{l}^{s})\big\|\leq\alpha (t-t^{0})G_{max}.
\end{aligned}
\end{equation}
Due to $\overline{\textbf{x}}_{j}^{t^{0}}=\textbf{x}_{j}^{t^{0}}$, we rewrite \eqref{eq30} as
\begin{equation}  \label{eq31}
\begin{aligned}
\big\|\overline{\textbf{x}}_{j}^{t}-\textbf{x}_{j}^{t^{0}}\big\|\leq\alpha (t-t^{0})G_{max}.
\end{aligned}
\end{equation}
Substituting  \eqref{eq31} and \eqref{eq28} into \eqref{eq25} completes the proof.

\end{proof}

Before bounding $\|\textbf{x}_{j}^{t}-\overline{\textbf{x}}_{i}^{t}\|$, we introduce an additional assumption of bounded disagreement.
\begin{assumption}\label{assum4}
For any iteration $t^{0}\in\mathcal{I}_{E}$ in the inexact MUSIC \eqref{eq8}-\eqref{eq9} and the subsequent exact MUSIC \eqref{eq49}-\eqref{eq50}, the deviations between any two agents $i$ and $j$ are bounded, i.e., $\|\textbf{x}_{j}^{t^{0}}-\textbf{x}_{i}^{t^{0}}\|\leq\varepsilon$, where $\varepsilon$ is a small nonnegative constant.
\end{assumption}
Many previous studies \cite{lobel2010distributed,xi2016distributed,nedic2020distributed,sayed2014adaptive} have clearly shown that  the disagreement between estimates across all agents generated by combination (consensus) step \eqref{eq3} or \eqref{eq6} goes almost surely to zero, i.e., $\lim_{t\rightarrow \infty}\|\textbf{x}_{j}^{t}-\textbf{x}_{i}^{t}\|=0$ with probability 1 when the network connectivity, doubly stochastic weight $w_{ij}$, and bounded gradients assumptions hold. Therefore, the  finite $\|\textbf{x}_{j}^{t^{0}}-\textbf{x}_{i}^{t^{0}}\|$ is a reasonable assumption. Consequently, we get the following lemma.

\begin{lemma}\label{lem3}\textbf{(Bounded disagreement $\|\textbf{x}_{j}^{t}-\overline{\textbf{x}}_{i}^{t}\|$)}
Under Assumption \ref{assum4}, for the inexact MUSIC \eqref{eq8}-\eqref{eq9}, $\|\textbf{x}_{j}^{t}-\overline{\textbf{x}}_{i}^{t}\|$ is bounded as follows
\begin{equation}  \label{eq32}
\begin{aligned}
\|\textbf{x}_{j}^{t}-\overline{\textbf{x}}_{i}^{t}\|\leq4\alpha (t-t^{0})G_{max}+\varepsilon,
\end{aligned}
\end{equation}
for any $t^{0}\in\mathcal{I}_{E}$ and  $0\leq t-t^{0}\leq E-1$.
\end{lemma}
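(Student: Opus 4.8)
The plan is to reduce $\|\textbf{x}_{j}^{t}-\overline{\textbf{x}}_{i}^{t}\|$ to quantities already controlled, by inserting the local iterate $\textbf{x}_{i}^{t}$ and the last combination-time snapshots $\textbf{x}_{i}^{t^{0}},\textbf{x}_{j}^{t^{0}}$. First I would handle the trivial case $t=t^{0}$: there $\overline{\textbf{x}}_{i}^{t^{0}}=\sum_{l}w_{il}\textbf{x}_{l}^{t^{0}}$, so by convexity of the norm and Assumption \ref{assum4}, $\|\textbf{x}_{j}^{t^{0}}-\overline{\textbf{x}}_{i}^{t^{0}}\|\le\sum_{l}w_{il}\|\textbf{x}_{j}^{t^{0}}-\textbf{x}_{l}^{t^{0}}\|\le\varepsilon$, which matches \eqref{eq32} with the $G_{max}$ term vanishing. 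For $t>t^{0}$, apply the triangle inequality $\|\textbf{x}_{j}^{t}-\overline{\textbf{x}}_{i}^{t}\|\le\|\textbf{x}_{j}^{t}-\textbf{x}_{i}^{t}\|+\|\textbf{x}_{i}^{t}-\overline{\textbf{x}}_{i}^{t}\|$, and observe that the second term is precisely the deviation bounded in Lemma \ref{lem2} with the agent index relabeled $j\mapsto i$, hence at most $2\alpha(t-t^{0})G_{max}$.

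It then remains to bound $\|\textbf{x}_{j}^{t}-\textbf{x}_{i}^{t}\|$, which I would split through the last combination time,
\[
\|\textbf{x}_{j}^{t}-\textbf{x}_{i}^{t}\|\le\|\textbf{x}_{j}^{t}-\textbf{x}_{j}^{t^{0}}\|+\|\textbf{x}_{j}^{t^{0}}-\textbf{x}_{i}^{t^{0}}\|+\|\textbf{x}_{i}^{t^{0}}-\textbf{x}_{i}^{t}\|.
\]
The two outer terms are each handled exactly as in \eqref{eq26}--\eqref{eq28}: telescoping the at most $E-1$ inner gradient-descent steps from $t^{0}$ to $t$ and invoking the bounded-gradient Assumption \ref{assum3} gives $\|\textbf{x}_{j}^{t}-\textbf{x}_{j}^{t^{0}}\|\le\alpha(t-t^{0})G_{max}$ and likewise for $i$; the middle term is $\le\varepsilon$ by Assumption \ref{assum4} (applied at $t^{0}\in\mathcal{I}_{E}$, not at a generic iteration). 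Summing yields $\|\textbf{x}_{j}^{t}-\textbf{x}_{i}^{t}\|\le2\alpha(t-t^{0})G_{max}+\varepsilon$, and adding the Lemma \ref{lem2} bound on $\|\textbf{x}_{i}^{t}-\overline{\textbf{x}}_{i}^{t}\|$ delivers the claimed $4\alpha(t-t^{0})G_{max}+\varepsilon$.

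The argument is essentially bookkeeping, so I do not anticipate a genuine obstacle; the only points requiring care are (i) that the anchor $t^{0}$ is a common combination time for both agents, so that the same factor $(t-t^{0})$ appears in every sub-bound and $0\le t-t^{0}\le E-1$, and (ii) that Assumption \ref{assum4} is invoked at $t^{0}\in\mathcal{I}_{E}$. I note in passing that a slightly sharper route --- writing $\|\textbf{x}_{j}^{t}-\overline{\textbf{x}}_{i}^{t}\|=\|\sum_{l}w_{il}(\textbf{x}_{j}^{t}-\textbf{x}_{l}^{t})\|\le\sum_{l}w_{il}\|\textbf{x}_{j}^{t}-\textbf{x}_{l}^{t}\|$ and bounding each pairwise term by the $t^{0}$-decomposition --- would already give coefficient $2$; the decomposition above trades this for a direct reuse of Lemma \ref{lem2}, which is the form convenient for the subsequent dynamical-system recursion.
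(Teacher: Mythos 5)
Your proof is correct and takes essentially the same route as the paper's: a triangle inequality anchored at the last combination time $t^{0}$, the telescoped gradient bounds \eqref{eq28} and \eqref{eq31}, Lemma \ref{lem2}, and Assumption \ref{assum4} invoked at $t^{0}\in\mathcal{I}_{E}$, producing the identical constant $4\alpha(t-t^{0})G_{max}+\varepsilon$. The only cosmetic difference is the choice of pivot: you split through $\textbf{x}_{i}^{t}$ via $\|\textbf{x}_{j}^{t}-\textbf{x}_{i}^{t}\|+\|\textbf{x}_{i}^{t}-\overline{\textbf{x}}_{i}^{t}\|$, whereas the paper splits through $\overline{\textbf{x}}_{j}^{t}$ via $\|\textbf{x}_{j}^{t}-\overline{\textbf{x}}_{j}^{t}\|+\|\overline{\textbf{x}}_{j}^{t}-\overline{\textbf{x}}_{i}^{t}\|$; both reduce to the same sub-estimates.
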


\begin{proof}

Note that
\begin{equation}  \label{eq33}
\begin{aligned}
\|\textbf{x}_{j}^{t}-\overline{\textbf{x}}_{i}^{t}\|&=\|\textbf{x}_{j}^{t}-\overline{\textbf{x}}_{j}^{t}+\overline{\textbf{x}}_{j}^{t}-\overline{\textbf{x}}_{i}^{t}\|\\
&\leq\|\textbf{x}_{j}^{t}-\overline{\textbf{x}}_{j}^{t}\|+\|\overline{\textbf{x}}_{j}^{t}-\overline{\textbf{x}}_{i}^{t}\|
\end{aligned}
\end{equation}
for any two agents $i$ and $j$ over the network.
For the second term of right hand of \eqref{eq33}, we have
\begin{equation}  \label{eq34}
\begin{aligned}
\|\overline{\textbf{x}}_{j}^{t}-\overline{\textbf{x}}_{i}^{t}\|&=\|(\overline{\textbf{x}}_{j}^{t}-\textbf{x}_{j}^{t^{0}})+(\textbf{x}_{i}^{t^{0}}-\overline{\textbf{x}}_{i}^{t})+(\textbf{x}_{j}^{t^{0}}-\textbf{x}_{i}^{t^{0}})\|\\
&\leq\|\overline{\textbf{x}}_{j}^{t}-\textbf{x}_{j}^{t^{0}}\|+\|\textbf{x}_{i}^{t^{0}}-\overline{\textbf{x}}_{i}^{t}\|+\|\textbf{x}_{j}^{t^{0}}-\textbf{x}_{i}^{t^{0}}\|\\
&\leq2\alpha (t-t^{0})G_{max}+\varepsilon,
\end{aligned}
\end{equation}
where we use the inequality \eqref{eq31} and Assumption \ref{assum4}.

Substituting \eqref{eq24} and \eqref{eq34} into \eqref{eq33} leads to \eqref{eq32}.

\end{proof}

Finally, we obtain the convergence result of inexact MUSIC as follows:
\begin{theorem}\label{them1} Let Assumptions \ref{assum1}-\ref{assum4} and $\alpha\leq\frac{1}{2L}$ hold, the inexact MUSIC \eqref{eq8}-\eqref{eq9} converges linearly  in the mean-square sense to a neighborhood of the optimum solution:
\begin{equation}  \label{eq35}
\begin{aligned}
\big\|\overline{\textbf{x}}_{i}^{kE}-\textbf{x}^{*}\big\|^{2}\leq(1-\mu\alpha)^{kE}\big\|\overline{\textbf{x}}_{i}^{0}-\textbf{x}^{*}\big\|^{2}+D_{1}
\end{aligned}
\end{equation}
for $k=1,2,\ldots,\lfloor T/E\rfloor$, where
\begin{equation}  \label{eq35-1}
\begin{aligned}
D_{1}&=\frac{(1-(1-\mu\alpha)^{kE})}{1-(1-\mu\alpha)^{E}}\sum\limits_{s=0}^{E-1}\xi^{E-1-s}(1-\mu\alpha)^{s}\\
&\underrightarrow{k\rightarrow\infty}\;\mathcal{O}\bigg(\frac{(E-1)^{2}(16+4\gamma)\alpha^{2}G_{max}^{2}+2\alpha\tau}{\mu\alpha}\bigg)
\end{aligned}
\end{equation}
and $\xi^{s}=4\gamma\alpha^{2}s^{2}G^{2}_{max}+[4\alpha sG_{max}+\varepsilon]^{2}+2\alpha\tau$.
\end{theorem}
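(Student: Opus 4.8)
The plan is to compress Lemmas~\ref{lem1}--\ref{lem3} into one per-step contraction for the averaged optimality gap $\|\overline{\textbf{x}}_{i}^{t}-\textbf{x}^{*}\|^{2}$, telescope it across a single inner loop of $E$ local updates, and then telescope the resulting per-round recursion over the $\lfloor T/E\rfloor$ communication rounds. Throughout, fix a round $k$, let $t^{0}=(k-1)E\in\mathcal{I}_{E}$ be the combination time that opens the loop, and write $s=t-t^{0}\in\{0,\dots,E-1\}$ for the position inside it.

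First I would establish the one-step bound $\|\overline{\textbf{x}}_{i}^{t+1}-\textbf{x}^{*}\|^{2}\le(1-\mu\alpha)\|\overline{\textbf{x}}_{i}^{t}-\textbf{x}^{*}\|^{2}+\xi^{s}$. For a pure local update ($t+1\notin\mathcal{I}_{E}$) one has $\overline{\textbf{x}}_{i}^{t+1}=\overline{\textbf{v}}_{i}^{t+1}$, so Lemma~\ref{lem1} applies directly; inserting $\|\textbf{x}_{j}^{t}-\overline{\textbf{x}}_{j}^{t}\|\le 2\alpha sG_{max}$ from Lemma~\ref{lem2} into the third term of \eqref{eq13} and $\|\textbf{x}_{j}^{t}-\overline{\textbf{x}}_{i}^{t}\|\le 4\alpha sG_{max}+\varepsilon$ from Lemma~\ref{lem3} into the second, and using $\sum_{j}w_{ij}=1$, collapses the right-hand side to $(1-\mu\alpha)\|\overline{\textbf{x}}_{i}^{t}-\textbf{x}^{*}\|^{2}+4\gamma\alpha^{2}s^{2}G_{max}^{2}+(4\alpha sG_{max}+\varepsilon)^{2}+2\alpha\tau$, i.e. exactly $(1-\mu\alpha)\|\overline{\textbf{x}}_{i}^{t}-\textbf{x}^{*}\|^{2}+\xi^{s}$. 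For a combination update ($t+1\in\mathcal{I}_{E}$) one has $\textbf{x}_{i}^{t+1}=\overline{\textbf{v}}_{i}^{t+1}$, hence $\overline{\textbf{x}}_{i}^{t+1}=\sum_{j}w_{ij}\overline{\textbf{v}}_{j}^{t+1}$; convexity of $\|\cdot\|^{2}$ and double stochasticity give $\|\overline{\textbf{x}}_{i}^{t+1}-\textbf{x}^{*}\|^{2}\le\sum_{j}w_{ij}\|\overline{\textbf{v}}_{j}^{t+1}-\textbf{x}^{*}\|^{2}$, and Lemma~\ref{lem1} applied to each $j$ (its residual terms are uniform in $j$, hence unchanged under the convex combination) yields the same contraction with $s=E-1$.

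Then I would telescope. Iterating the per-step bound from $t^{0}$ to $t^{0}+E=kE$ gives $\|\overline{\textbf{x}}_{i}^{kE}-\textbf{x}^{*}\|^{2}\le(1-\mu\alpha)^{E}\|\overline{\textbf{x}}_{i}^{(k-1)E}-\textbf{x}^{*}\|^{2}+\sum_{s=0}^{E-1}(1-\mu\alpha)^{E-1-s}\xi^{s}$, and reindexing the sum identifies the constant $\xi:=\sum_{s=0}^{E-1}\xi^{E-1-s}(1-\mu\alpha)^{s}$ of \eqref{eq35-1}. With $b_{k}:=\|\overline{\textbf{x}}_{i}^{kE}-\textbf{x}^{*}\|^{2}$ this reads $b_{k}\le(1-\mu\alpha)^{E}b_{k-1}+\xi$; unrolling over $k$ and summing the finite geometric series $\sum_{m=0}^{k-1}(1-\mu\alpha)^{mE}=\frac{1-(1-\mu\alpha)^{kE}}{1-(1-\mu\alpha)^{E}}$ produces \eqref{eq35} with $D_{1}$ exactly as in \eqref{eq35-1}. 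For the stated order, I would use that $\xi^{s}$ is nondecreasing in $s$, so $\xi\le\xi^{E-1}\sum_{s=0}^{E-1}(1-\mu\alpha)^{s}=\xi^{E-1}\frac{1-(1-\mu\alpha)^{E}}{\mu\alpha}$, whence $D_{1}\le\frac{(1-(1-\mu\alpha)^{kE})\xi^{E-1}}{\mu\alpha}$, which tends to $\frac{\xi^{E-1}}{\mu\alpha}$ as $k\to\infty$; expanding $\xi^{E-1}=4\gamma\alpha^{2}(E-1)^{2}G_{max}^{2}+(4\alpha(E-1)G_{max}+\varepsilon)^{2}+2\alpha\tau$ and absorbing the $\varepsilon$-dependent terms gives $\mathcal{O}\!\left(\frac{(16+4\gamma)(E-1)^{2}\alpha^{2}G_{max}^{2}+2\alpha\tau}{\mu\alpha}\right)$.

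The hard part will be the combination step: after the convexity estimate the contractive term is the neighbour average $(1-\mu\alpha)\sum_{j}w_{ij}\|\overline{\textbf{x}}_{j}^{kE-1}-\textbf{x}^{*}\|^{2}$ rather than agent $i$'s own gap, so the single-agent recursion does not literally close. I would resolve this either by tracking the worst-agent quantity $M^{t}=\max_{i}\|\overline{\textbf{x}}_{i}^{t}-\textbf{x}^{*}\|^{2}$, for which $\sum_{j}w_{ij}\|\overline{\textbf{x}}_{j}^{t}-\textbf{x}^{*}\|^{2}\le M^{t}$ makes the identical recursion hold, or by the standard convention that all agents share the initial estimate, so that the $\textbf{W}$-power averaging of the initial gaps collapses back to $\|\overline{\textbf{x}}_{i}^{0}-\textbf{x}^{*}\|^{2}$ on the right-hand side of \eqref{eq35}. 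Two side conditions are also needed and are already in the hypotheses: $\alpha\le\frac{1}{2L}$ (so Lemma~\ref{lem1} is in force and $\gamma\ge0$) and $\mu\alpha<1$, which keeps $1-\mu\alpha\in(0,1)$ and is what makes every geometric sum above converge and drives the linear factor $(1-\mu\alpha)^{kE}$.
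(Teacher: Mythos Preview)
Your plan is exactly the paper's: plug Lemmas~\ref{lem2} and~\ref{lem3} into Lemma~\ref{lem1} to obtain the per-step contraction $\|\overline{\textbf{x}}_{i}^{t+1}-\textbf{x}^{*}\|^{2}\le(1-\mu\alpha)\|\overline{\textbf{x}}_{i}^{t}-\textbf{x}^{*}\|^{2}+\xi^{t-t^{0}}$, telescope over one block of $E$ inner steps, then over the $k$ rounds, and finally bound the residual sum by $\xi^{E-1}\cdot\frac{1-(1-\mu\alpha)^{E}}{\mu\alpha}$ to read off the asymptotic order. The only place you diverge is that you treat the combination step with more care than the paper does: the paper simply asserts that ``no matter whether $t\in\mathcal{I}_{E}$ or $t\notin\mathcal{I}_{E}$, $\overline{\textbf{x}}_{i}^{t}=\overline{\textbf{v}}_{i}^{t}$ is always tenable'' and proceeds directly to \eqref{eq36}, whereas you (correctly) note that after a combination the contractive term is really the neighbour average $\sum_{j}w_{ij}\|\overline{\textbf{x}}_{j}^{t}-\textbf{x}^{*}\|^{2}$ and propose closing via $\max_{i}$ or common initialization. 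The paper also handles the $\varepsilon$ term in the asymptotic by explicitly invoking $\varepsilon\to 0$ (consensus as $k\to\infty$), rather than absorbing it into the $\mathcal{O}$, but either treatment yields the stated order.
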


\begin{proof}
It is known that no matter whether $t\in\mathcal{I}_{E}$ or $t\notin\mathcal{I}_{E}$, $\overline{\textbf{x}}_{i}^{t}=\overline{\textbf{v}}_{i}^{t}$ is always tenable. Hence, by combining Lemmas \ref{lem1}-\ref{lem3},  we have
\begin{equation}  \label{eq36}
\begin{aligned}
\parallel\overline{\textbf{x}}_{i}^{t+1}-\textbf{x}^{*}\parallel^{2}\leq(1-\mu\alpha)\|\overline{\textbf{x}}_{i}^{t}-\textbf{x}^{*}\|^{2}+\xi^{t-t^{0}}.
\end{aligned}
\end{equation}

For convenience, when $\Delta^{t}=\|\overline{\textbf{x}}_{i}^{t}-\textbf{x}^{*}\|^{2}$ is defined, we also write \eqref{eq36} as
\begin{equation}  \label{eq38}
\begin{aligned}
\Delta^{t+1}\leq (1-\mu\alpha)\Delta^{t}+\xi^{t-t^{0}}.
\end{aligned}
\end{equation}
By recursively applying \eqref{eq38} from $t^{0}+1$ to $t^{0}+E$, we obtain
\begin{equation}  \label{eq39}
\begin{aligned}
\Delta^{t^{0}+E}&\leq(1-\mu\alpha)^{E}\Delta^{t^{0}}+\sum\limits_{s=0}^{E-1}\xi^{E-1-s}(1-\mu\alpha)^{s},
\end{aligned}
\end{equation}
or,
\begin{equation}  \label{eq40}
\begin{aligned}
\big\|\overline{\textbf{x}}_{i}^{kE}-\textbf{x}^{*}\big\|^{2}\leq(1-\mu\alpha)^{E}\big\|\overline{\textbf{x}}_{i}^{(k-1)E}-\textbf{x}^{*}\big\|^{2}+D_{2},
\end{aligned}
\end{equation}
where $D_{2}=\sum\limits_{s=0}^{E-1}\xi^{E-1-s}(1-\mu\alpha)^{s}$ and $k=1,2,\ldots,\lfloor T/E\rfloor$.
By recursively using  \eqref{eq40} for $k$ times, we have
\begin{equation}  \label{eq41}
\begin{aligned}
\big\|\overline{\textbf{x}}_{i}^{kE}-\textbf{x}^{*}\big\|^{2}\leq(1-\mu\alpha)^{kE}\big\|\overline{\textbf{x}}_{i}^{0}-\textbf{x}^{*}\big\|^{2}+D_{1},
\end{aligned}
\end{equation}
where
\begin{equation}  \label{eq42}
\begin{aligned}
D_{1}=\frac{D_{2}(1-(1-\mu\alpha)^{kE})}{1-(1-\mu\alpha)^{E}}.
\end{aligned}
\end{equation}

When $k\rightarrow\infty$, we can conclude that a consensus is asymptotically achieved among the local estimates (i.e., $\lim\limits_{k\rightarrow\infty}\varepsilon=0$) under the standard consensus strategy. Moreover, we additionally use $\xi^{t-t^{0}}=\xi^{E-1}$ in \eqref{eq38} with a purpose to simplify calculations, thus obtain
\begin{equation}  \label{eq42-1}
\begin{aligned}
\limsup\limits_{k\rightarrow\infty} \xi^{E-1}=(E-1)^{2}(16+4\gamma)\alpha^{2}G_{max}^{2}+2\alpha\tau,
\end{aligned}
\end{equation}
\begin{equation}  \label{eq42-2}
\begin{aligned}
\limsup\limits_{k\rightarrow\infty}D_{2}=\frac{(1-(1-\mu\alpha)^{E})}{\mu\alpha}\limsup\limits_{k\rightarrow\infty}\xi^{E-1}
\end{aligned}
\end{equation}
and
\begin{equation}  \label{eq42-3}
\begin{aligned}
\limsup\limits_{k\rightarrow\infty}D_{1}=\frac{(E-1)^{2}(16+4\gamma)\alpha G_{max}^{2}+2\tau}{\mu},
\end{aligned}
\end{equation}
which completes the proof.
\end{proof}

Provided that $\alpha\leq \frac{1}{2L}$ due to $L\geq \mu$ based on Assumptions \ref{assum1} and \ref{assum2}, Theorem \ref{them1} shows that the mean square solution generated by the inexact MUSIC method converges linearly with a rate $\mathcal{O}((1-\mu\alpha)^{kE})$, which is monotone increasing and decreasing with respect to $E$ and $\alpha$, respectively, until reaching the error neighborhood with size
\begin{equation}  \label{eq42-4}
\begin{aligned}
\mathcal{O}\bigg(\underbrace{\frac{(E-1)^{2} (16+4\gamma)\alpha G_{max}^{2}}{\mu}}\limits_{\textrm{local\;drift}}+\underbrace{\frac{2\tau}{\mu}}\limits_{\textrm{inexact\;bias}}\bigg).
\end{aligned}
\end{equation}
which consists of two terms. The local drift term results from the accumulation of deviations between local variables and the global consensus when examining the second and third terms on the right hand side of \eqref{eq13}. The second term is  the source of the bias generated by the inherent inexact strategy. When $E=1$, the inexact MUSIC degrades to the standard ATC version \eqref{eq5}-\eqref{eq6} with a convergence rate $\mathcal{O}((1-\mu\alpha)^{k})$ and an asymptotic error of size $\mathcal{O}(\frac{2\tau}{\mu})$, which can not be removed in the context of such inexact policy.

\begin{remark} \textbf{(Choices of $\alpha$ and $E$)}On one hand, from Theorem \ref{them1}, there are no restrictions imposed on the frequency $E$ of local updates. This implies that $E$ can take on a large value to expedite convergence. On the other hand, as indicated by \eqref{eq42-4}, an excessively large value of $E$ can significantly expand the size of the error neighborhood.  Consequently, the parameter $E$ plays a role similar to the step size $\alpha$ in balancing the tradeoff between convergence speed and accuracy. As a result, by selecting a slightly larger $E$ than 1 (e.g., 2, 3, 4) along with a small step size, we can achieve a double win of convergence rate and steady-state accuracy. This situation effectively addresses a longstanding challenge in the domain of conventional optimization techniques based on inexact first-order methods. Particularly, it is a better choice by using a diminishing step size (e.g., $\alpha^{t}=\frac{\alpha}{t^{\delta}}, \delta\in(0,2)$) to reinforce this strategy.

\end{remark}

\subsection{Numerical Results for inexact MUSIC}\label{IIB}

In this section, we provide some empirical results of inexact MUSIC for solving a representative least squares problem with the following form
\begin{equation}  \label{eq43}
\min\limits_{\textbf{x}\in\mathbb{R}^{p}}\sum\limits_{i=1}^{N}f_{i}(\textbf{x})=\min\limits_{\textbf{x}\in\mathbb{R}^{p}}\sum\limits_{i=1}^{N}\frac{1}{2}\|\textbf{A}_{i}^{T}\textbf{x}-b_{i}\|^{2}+\frac{\mu}{2}\|\textbf{x}\|^{2},
\end{equation}
where we assume that each agent $i$ holds the local objective $f_{i}(\textbf{x})=\frac{1}{2}\|\textbf{A}_{i}^{T}\textbf{x}-b_{i}\|^{2}+\frac{\mu}{2}\|\textbf{x}\|^{2}$. We generate $\textbf{A}_{i}\in\mathbb{R}^{p\times m}$ and $b_{i}\in\mathbb{R}^{m}$ by following the uniform distribution with each entry in [0, 1]. Based on the global cost function given in \eqref{eq43}, the optimal solution can be obtained as $\textbf{x}^{*}=(\sum_{i=1}^{N}\textbf{A}_{i}^{T}\textbf{A}_{i}+\mu \textbf{I})^{-1}\sum_{i=1}^{N}\textbf{A}_{i}^{T}\textbf{b}_{i}$. We evaluate performance in terms of the relative error that is defined as $\frac{1}{N}\sum_{i=1}^{N}\frac{\|\textbf{x}_{i}^{t}-\textbf{x}^{*}\|^{2}}{\|\textbf{x}_{i}^{0}-\textbf{x}^{*}\|^{2}}$ with initial value $\textbf{x}_{i}^{0}=0$. The weight matrix $\textbf{W}$ over an undirected Erdos-Renyi graph with average degree 4 is generated by Metropolis rule \cite{sayed2014adaptation} since no obvious difference exists between the different doubly stochastic rules. We set $N=100$, $p=m=10$ and $\mu=10^{-6}$ for all experiments in this problem.

\textbf{Effect of $E$.}  From Fig. \ref{fig2} (a), when the step size is fixed during iterations, one can see that the parameter $E$ plays a role similar to the step size $\alpha$ (see Fig. \ref{fig2} (b)), i.e., larger (smaller) $E$ or $\alpha$ results in faster (slower) convergence rate and lower (higher) accuracy. Unlike conventional ATC/DGD method, where only step size parameter is used to control the convergence of algorithm, our inexact MUSIC provides a new tool enabling balance between rate and accuracy for inexact methods, such as both fast rate and good accuracy can be achieved.

\begin{figure}
\centering
\subfigure[ ]{
\centering
\includegraphics[width=4cm,height=3 cm]{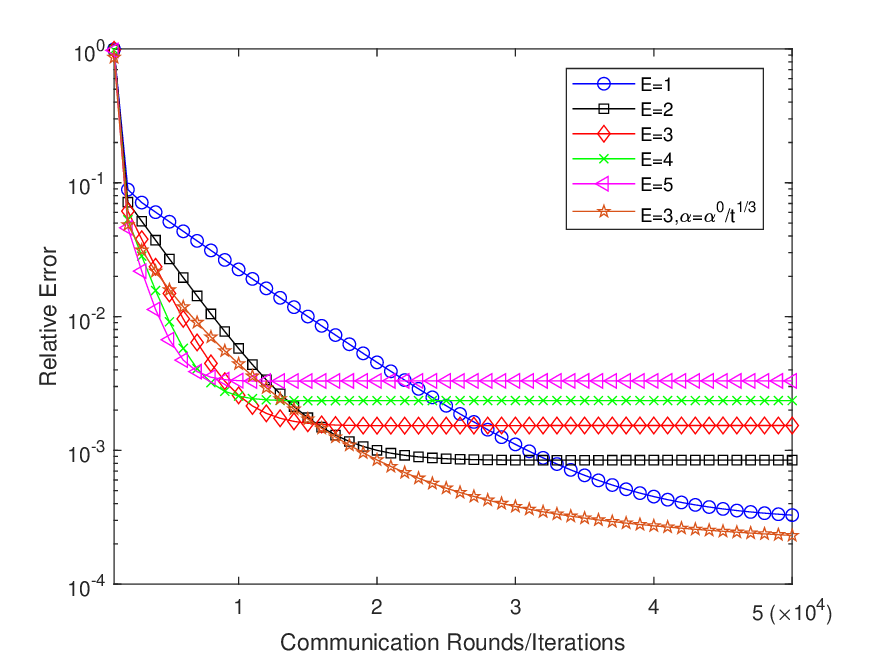}
}
\subfigure[ ]{
\centering
\includegraphics[width=4cm,height=3 cm]{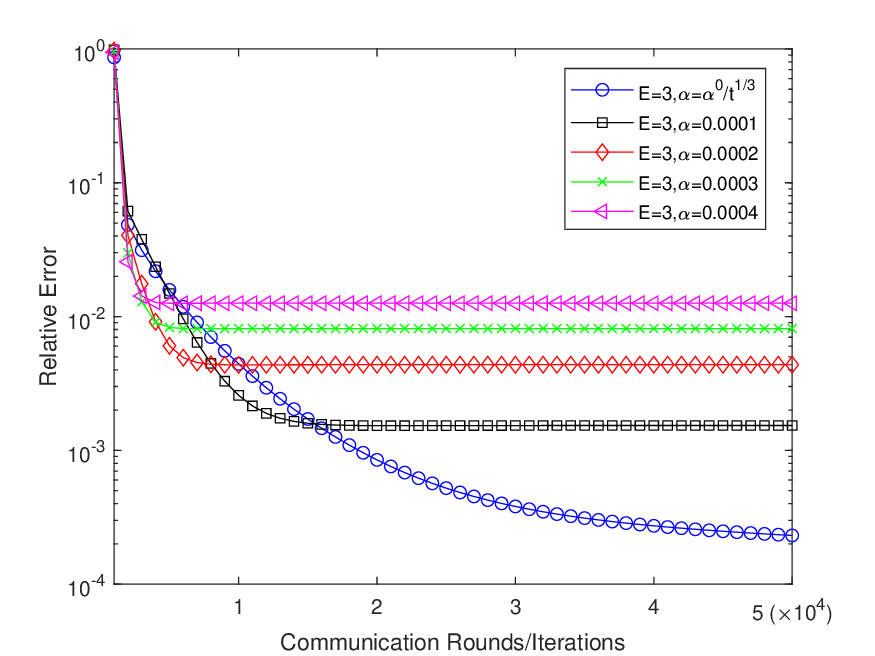}
}
\subfigure[ ]{
\centering
\includegraphics[width=4cm,height=3 cm]{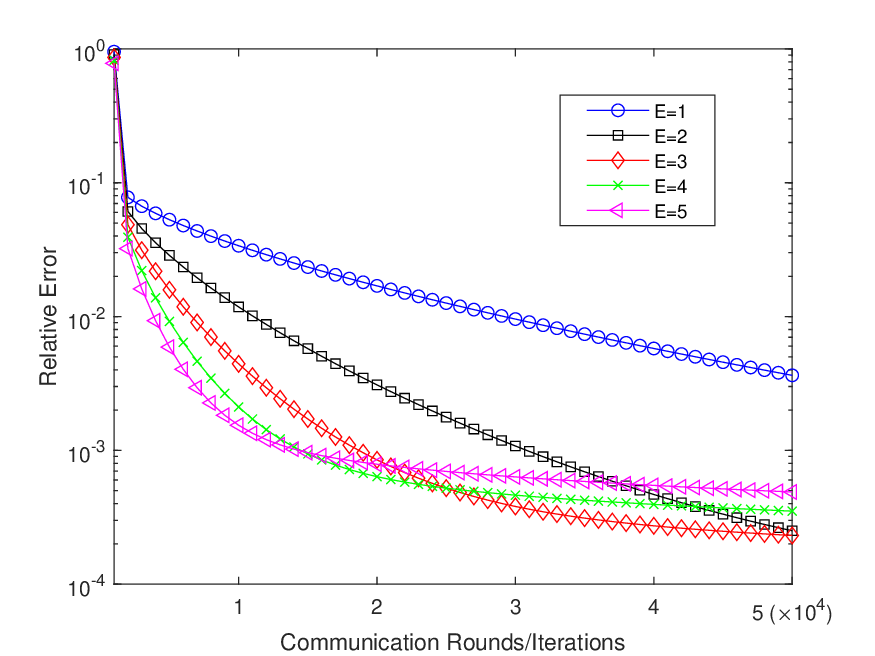}
}\centering
\subfigure[ ]{
\centering
\includegraphics[width=4cm,height=3 cm]{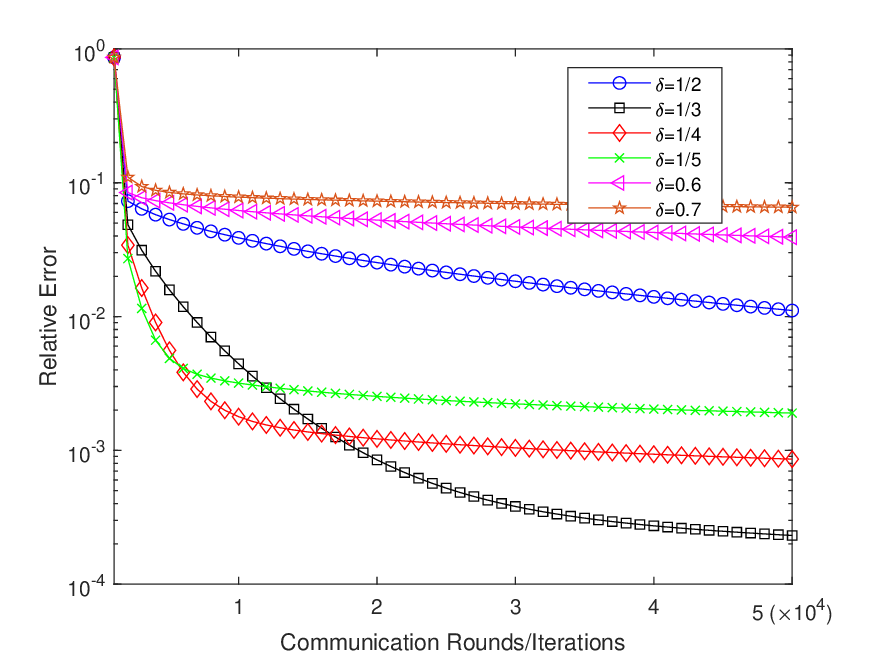}
}\centering
\caption{ Performance of inexact MUSIC measured in terms of relative error with respect to communication rounds or iterations on the distributed quadratic problem (a) impact of $E$ under $\alpha=0.0001$ (b) impact of different fixed step sizes under $E=3$ (c) impact of $E$ under a diminishing step size $\alpha=\alpha^{0}/t^{\frac{1}{2}}$ with $\alpha^{0}=0.001$ (d) impact of diminishing step sizes under $E=3$.}\label{fig2}
\end{figure}

\textbf{Benefits of diminishing step sizes.} Fig. \ref{fig2} (a) and (b) show that the diminishing step size achieves the best performance both on rate and accuracy. Under a diminishing step size,  Fig. \ref{fig2} (c) shows that over-large or over-small $E$ leads to significant worse convergence accuracy. When $E$ is fixed, same effect on the decaying rate $\delta$ is also observed in Fig. \ref{fig2} (d), which consolidates  the efficiency of $E$.

\section{Exact MUSIC}\label{IV}

Though serving as a warm up method, the feasibility of inexact MUSIC motivates us to ask the question whether exact convergence with communication efficacy can be achieved in a MUSIC way.  Obviously, previous results from inexact MUSIC  indicate that only multiple updates are insufficient for converging to the exact solution. Instead, a larger $E$ leads to a larger error neighborhood. Several recent works on exact methods have been proposed, such as EXTRA, DIGing, NEAR$\_$DGD, etc. However, their exact solutions are achieved at the cost of expensive communication. Table \ref{tab1} shows a comparison on the number of communications (gradient exchange or decision vector exchange) per round to reach an exact solution.

\begin{table*}
  \centering
  \caption{A comparison of existing  representative distributed algorithms when they converge to an exact solution in terms of communications and gradient evaluations. Here, $p\;\textrm{or}\;2p$ represents that $p$ or $2p$ scalar communications are consumed when extra memory is used or not. The same explanation is given to the notation of $1\;\textrm{or}\;2$. $\kappa\triangleq\frac{L}{\mu}>1$ is the condition number of the objective function and $0<\rho<1$ is the spectral radius of  the network.}\label{tab1}
  \begin{tabular}{ccccc}
  \hline
  Algorithm  & Communicated scalars per agent & Numbers of gradient evaluations & Communication complexity\\
                            &during one round &    per agent during one round &  \\
     \hline
 Exact MUSIC (this paper) & $p$  & $E$ & $\mathcal{O}(\frac{2\kappa}{E}\log (\frac{1}{\epsilon}))$ \\
Algorithm (\ref{eq86})(\ref{eq87})& $p$  & $E$ & $\mathcal{O}(\frac{2\kappa}{E}\log (\frac{1}{\epsilon}))$\\
Exact diffusion \cite{yuan2018exact,yuan2020influence}  & $p$ &  $1$ & $\mathcal{O}(2\kappa\log (\frac{1}{\epsilon}))$ \\
  EXTRA \cite{shi2015extra,li2020revisiting} & $p\;\textrm{or}\;2p$ &  $1\;\textrm{or}\;2$ & $\mathcal{O}(\frac{L^{2}\kappa^{2}}{1-\rho}\log (\frac{1}{\epsilon}))$ \\
  DIGing \cite{nedic2017achieving} & $2p$  &  $1\;\textrm{or}\;2$ & $\mathcal{O}(\frac{\kappa}{(1-\rho)^{2}}\log (\frac{1}{\epsilon}))$\\
Aug-DGM  \cite{7963560,7402509} & $2p$ & $1\;\textrm{or}\;2$ & $\mathcal{O}(\max\{\kappa,\frac{1}{(1-\rho)^{2}}\}\log (\frac{1}{\epsilon}))$\\
NIDS \cite{li2019decentralized}& $p\;\textrm{or}\;2p$ &  $1\;\textrm{or}\;2$   & $\mathcal{O}(\max\{\kappa,\frac{1}{1-\rho}\}\log (\frac{1}{\epsilon}))$\\
Harnessing \cite{qu2017harnessing} & $2p$ &  $1\;\textrm{or}\;2$   & $\mathcal{O}(\frac{\kappa}{(1-\rho)^{2}}\log (\frac{1}{\epsilon}))$\\
NEAR-DGD$^{+}$ \cite{berahas2018balancing} & $cp\;\;(c\gg1)$ &  $c\gg1$ & $\mathcal{O}((\log (\frac{1}{\epsilon}))^{2})$ \\
Gradient tracking \cite{nedic2017achieving,alghunaim2020decentralized} & $2p$ & $1\;\textrm{or}\;2$ & $\mathcal{O}((\kappa+\frac{1}{(1-\rho)^{2}})\log (\frac{1}{\epsilon}))$\\
\hline
\end{tabular}
\end{table*}

For the purpose of communication efficiency, we aim to develop a novel exact MUSIC method based on the excellent exact diffusion scheme. Without any increase in communication as the inexact MUSIC method, the proposed method is communication efficient and exactly converges to the optimal solution. The main challenge is to ensure that nodes can still converge or approach to the optimal solution, while multiple local iterations are performed. Originating from the ATC structure, the vanilla exact diffusion method embeds a correction step between the local update and combination steps, as depicted below:

\begin{align}
\label{eq44} &\textbf{v}^{t+1}_{i}=\textbf{x}^{t}_{i}-\alpha \nabla f_{i}(\textbf{x}^{t}_{i}),\;\;\;\;\;\textbf{(local update)}\\
\label{eq45} &\textbf{y}^{t+1}_{i}=\textbf{v}^{t+1}_{i}+\textbf{x}^{t}_{i}-\textbf{v}^{t}_{i},\;\;\;\;\;\textbf{(correct)}\\
\label{eq46} &\textbf{x}^{t+1}_{i}=\sum\limits_{j\in\mathcal{N}_{i}}w_{ij}\textbf{y}^{t+1}_{j}. \;\;\;\;\;\textbf{(combine)}
\end{align}

In this adapt-correct-combine (ACC) structure, the correction means that the difference between local update and global combination at previous iteration is removed, such that the local estimate  is closer to the global one. Meanwhile, compared with the ATC method \eqref{eq5}-\eqref{eq6}, the exact diffusion \eqref{eq5}-\eqref{eq6} has the same number of communications and gradient evaluations, and  slightly more computation. To be precise, $2p$ additional additions per agent at each iteration are performed in the correction step.

Neglecting the intermediate variable $\textbf{y}^{t+1}_{i}$, we also can blend the exact diffusion in two steps
\begin{align}
\label{eq47} &\textbf{v}^{t+1}_{i}=\textbf{x}^{t}_{i}-\alpha \nabla f_{i}(\textbf{x}^{t}_{i}),\;\;\;\;\;\textbf{(local update)}\\
\label{eq48} &\textbf{x}^{t+1}_{i}=\sum\limits_{j\in\mathcal{N}_{i}}\overline{w}_{ij}(\textbf{v}^{t+1}_{j}+\textbf{x}^{t}_{j}-\textbf{v}^{t}_{j}) \;\;\;\;\;\textbf{(combine)}.
\end{align}

Based on the strategy of multi-updates single-combination, in this section, we aim to achieve a faster convergence rate while maintaining exact implementation and high communication efficiency at the cost of more local computations. Our proposed exact MUSIC algorithm updates as follows:

\begin{align}
\label{eq49} &\textbf{v}^{t+1}_{i}=\textbf{x}^{t}_{i}-\alpha \nabla f_{i}(\textbf{x}^{t}_{i}),\\
\label{eq50} &\textbf{x}_{i}^{t+1}=\begin{cases}
\textbf{v}^{t+1}_{i}+\beta(\textbf{x}^{t^{0}}_{i}-\textbf{v}^{t^{0}}_{i}) &\textrm{if}\;\; t+1\notin \mathcal{I}_{E} \\
\sum\limits_{j\in \mathcal{N}_{i}}\overline{w}_{ij}(\textbf{v}^{t+1}_{j}+\beta(\textbf{x}^{t^{0}}_{j}-\textbf{v}^{t^{0}}_{j}))  &\textrm{if}\;\; t+1\in \mathcal{I}_{E}
\end{cases},
\end{align}
where $t^{0}$ has a same definition as in the inexact MUSIC or $t^{0}=t+1-(t+1)\%E$ in this context, $\beta\in[0,1]$ is a gain factor which control the ratio of bias compensation to avoid overcompensation as the increase of $E$.

Fig. \ref{fig3} illustrates information flow in our exact MUSIC that  also contains two types of iterations. Compared to the inexact MUSIC, the main difference is that each agent corrects the bias between the local estimate and the previous global combination using the rule given in \eqref{eq45}. In other words, exact MUSIC uses explicitly multi-step corrections matching multi-step gradient descent at each agent. Moreover, in exact MUSIC,  the combination matrix $\overline{\textbf{W}}=(\textbf{W}+\textbf{I}_{N})/2=[\overline{w}_{ij}]\in \mathbb{R}^{N\times N}$ is different from $\textbf{W}$ used in inexact MUSIC. Hence, both $\overline{\textbf{W}}$ and $\textbf{W}$ are symmetric and doubly stochastic.

\begin{figure}[htbp]
\centering
\centerline{\includegraphics[width=5.5cm,height=7.5cm]{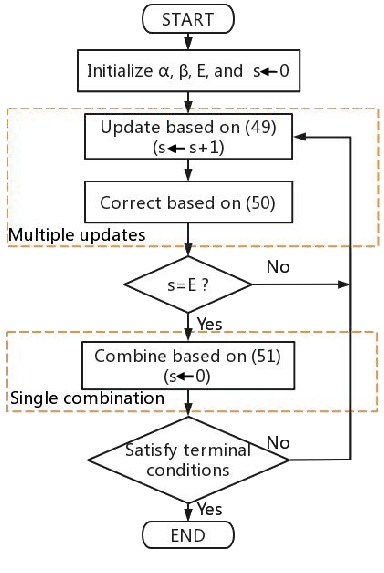}}
\caption{Illustration of workflow in the exact MUSIC.}\label{fig3}
\end{figure}

\subsection{Convergence analysis of exact MUSIC}\label{IIIA}

In this section, except for again using the assumptions \ref{assum1}-\ref{assum4} and the definitions of $\overline{\textbf{v}}_{i}^{t}$, $\overline{\textbf{x}}_{i}^{t}$ and $\overline{\textbf{g}}_{i}^{t}$ for analysis, we introduce several new global variables which are defined as follows:

$$\textbf{x}^{t}=[\textbf{x}_{1}^{t},\textbf{x}_{2}^{t},\ldots,\textbf{x}_{N}^{t}]^{T}\in\mathbb{R}^{Np},$$
$$\textbf{v}^{t}=[\textbf{v}_{1}^{t},\textbf{v}_{2}^{t},\ldots,\textbf{v}_{N}^{t}]^{T}\in\mathbb{R}^{Np},$$
$$\textbf{v}^{*}=[\textbf{v}_{1}^{*},\textbf{v}_{2}^{*},\ldots,\textbf{v}_{N}^{*}]^{T}\in\mathbb{R}^{Np},$$
$$\textbf{g}^{t}=[\textbf{g}_{1}^{t},\textbf{g}_{2}^{t},\ldots,\textbf{g}_{N}^{t}]^{T}\in\mathbb{R}^{Np},$$
where $\textbf{v}_{i}^{*}$ is the optimal vector for minimizing the local objective $f_{i}$ and definitely exists due to the convexity of $f_{i}$. By defining a matrix $\textbf{Z}=\overline{\textbf{W}}\otimes \textbf{I}_{p}\in\mathbb{R}^{Np\times Np}$, it is known that the eigenvalues of $\textbf{Z}$ are the same as those of $\overline{\textbf{W}}$ belonging to $(-1,1]$ due to the fact that the eigenvalues of arbitrary doubly stochastic matrix are bounded in $(-1,1]$.

Further, we can write the update of exact MUSIC \eqref{eq49}-\eqref{eq50} from a global perspective as follows
\begin{equation}  \label{eq51}
\textbf{v}^{t+1}=\textbf{x}^{t}-\alpha \textbf{g}^{t},\;\;\;\;\;\;\;\;\;\;\;\;\;\;\;\;\;\;\;\;\;\;\;\;\;\;\;\;\;\;\;\;\;\;\;\;\;\;\;\;\;\;\;\;\;\;\;\;\;\;
\end{equation}
\begin{equation}  \label{eq52}
\begin{aligned}
\textbf{x}^{t+1}=\begin{cases}
\textbf{v}^{t+1}+\beta(\textbf{x}^{t^{0}}-\textbf{v}^{t^{0}}) &\textrm{if}\;\; t+1\notin \mathcal{I}_{E} \\
\textbf{Z}(\textbf{v}^{t+1}+\beta(\textbf{x}^{t^{0}}-\textbf{v}^{t^{0}}))  &\textrm{if}\;\; t+1\in \mathcal{I}_{E}
\end{cases}.
\end{aligned}
\end{equation}

 From the correction step \eqref{eq50}, it follows that
\begin{equation}  \label{eq53}
\begin{aligned}
\|\overline{\textbf{x}}_{i}^{t+1}-\textbf{x}^{*}\|&=\|\overline{\textbf{v}}_{i}^{t+1}+\overline{\textbf{x}}_{i}^{t^{0}}-\overline{\textbf{v}}_{i}^{t^{0}}-\textbf{x}^{*}\|\\
&\leq\|\overline{\textbf{v}}_{i}^{t+1}-\textbf{x}^{*}\|+\|\overline{\textbf{x}}_{i}^{t^{0}}-\overline{\textbf{v}}_{i}^{t^{0}}\|.
\end{aligned}
\end{equation}
We also note that Lemma \ref{lem1} holds for both inexact and exact methods with one same gradient descent step. Thus, based on observations for inequalities  \eqref{eq13} and \eqref{eq53}, our analysis depends on the following three key lemmas to bound $\|\overline{\textbf{x}}_{i}^{t^{0}}-\overline{\textbf{v}}_{i}^{t^{0}}\|$, $\|\textbf{x}_{j}^{t}-\overline{\textbf{x}}_{i}^{t}\|$ and $\|\textbf{x}_{j}^{t}-\overline{\textbf{x}}_{j}^{t}\|$, respectively. We first establish an important inequality for $\|\overline{\textbf{x}}_{i}^{t^{0}}-\overline{\textbf{v}}_{i}^{t^{0}}\|$ which is the quantity of bias correction applied after each local update step.

\begin{lemma}\label{lem4}\textbf{(Bounded bias correction $\|\overline{\textbf{x}}_{i}^{t^{0}}-\overline{\textbf{v}}_{i}^{t^{0}}\|$)}
Under Assumptions \ref{assum1} and \ref{assum2}, if the step size satisfies $\alpha\leq\frac{1}{2L}$, for the exact MUSIC \eqref{eq49}-\eqref{eq50}, then we have
\begin{equation}  \label{eq54}
\begin{aligned}
\|\overline{\textbf{x}}_{i}^{t^{0}}-\overline{\textbf{v}}_{i}^{t^{0}}\|\leq\Theta^{t^{0}},
\end{aligned}
\end{equation}
where
\begin{equation}  \label{eq55}
\begin{aligned}
\Theta^{t^{0}}= a(x_{1})^{\frac{t^{0}}{E}}+b(x_{2})^{\frac{t^{0}}{E}}+c,
\end{aligned}
\end{equation}
$a$, $b$ and $c$ are the solution of the following linear system
\begin{equation}  \label{eq62-5}
  \begin{bmatrix}
    \Theta^{0} \\
    \Theta^{E}\\
    \Theta^{2E}
  \end{bmatrix}
=
  \begin{bmatrix}
    1 & 1 & 1\\
    x_{1} & x_{2} & 1\\
    (x_{1})^{2} & (x_{2})^{2} & 1
  \end{bmatrix}
  \begin{bmatrix}
    a\\
    b\\
    c
  \end{bmatrix}
\end{equation}
with
\begin{equation}  \label{eq62-1}
\begin{aligned}
x_{2,1}=\frac{(a_{11}+a_{22})\pm \sqrt{(a_{11}+a_{22})^{2}-4(a_{11}a_{22}-a_{12}a_{21})}}{2},
\end{aligned}
\end{equation}
\begin{equation}  \label{eq61}
\begin{aligned}
  &\begin{bmatrix}
    a_{11} & a_{12} & a_{13} \\
    a_{21} & a_{22} & a_{23}
  \end{bmatrix}\\
&\triangleq
  \begin{bmatrix}
    \frac{\beta\nu(1-\nu^{E})}{1-\nu}\|\textbf{Z}-\textbf{I}\|+\beta\|\textbf{Z}\| & \nu^{E}\|\textbf{Z}-\textbf{I}\| & \|\textbf{Z}-\textbf{I}\|\|\textbf{v}^{*}\| \\
    \frac{\beta\nu(1-\nu^{E})}{1-\nu} & \nu^{E} & 0
  \end{bmatrix}
\end{aligned}
\end{equation}
and
 \begin{equation}  \label{eq62-4}
\begin{aligned}
\begin{cases}
\Theta^{0}&=\|\textbf{x}^{0}-\textbf{v}^{0}\|=0\\
\Theta^{E}&=\nu^{E}\|\textbf{Z}-\textbf{I}\|\|\textbf{v}^{0}-\textbf{v}^{*}\|+\|\textbf{Z}-\textbf{I}\|\|\textbf{v}^{*}\|\\
\Theta^{2E}&=\big[\frac{\nu(1-\nu^{E})}{1-\nu}\|\textbf{Z}-\textbf{I}\|+\|\textbf{Z}\|\big]\beta\Theta^{E}\\
&\;\;\;\;+\nu^{2E}\|\textbf{Z}-\textbf{I}\|\|\textbf{v}^{0}-\textbf{v}^{*}\|+\|\textbf{Z}-\textbf{I}\|\|\textbf{v}^{*}\|
\end{cases},
\end{aligned}
\end{equation}
$\nu=\sqrt{1-2\alpha\lambda}\in(0,1),$ $\lambda=\frac{\mu L}{\mu +L},$ and $\textbf{v}^{*}$ has the entry $\textbf{v}_{i}^{*}=\arg\min_{\textbf{x}_{i}} f_{i}(\textbf{x}_{i})$ for $i=1,2,\ldots,N$.

\end{lemma}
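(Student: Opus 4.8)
The plan is to reduce the local quantity to a single global norm, then track that norm only at the combination epochs $t^{0}\in\mathcal{I}_{E}$ via a two-dimensional affine recursion, and finally solve the recursion in closed form. First I would record the elementary reduction: since $\overline{\textbf{x}}_{i}^{t}-\overline{\textbf{v}}_{i}^{t}=\sum_{j}w_{ij}(\textbf{x}_{j}^{t}-\textbf{v}_{j}^{t})$ with $\sum_{j}w_{ij}=1$ and $0\le w_{ij}\le1$, convexity of $\|\cdot\|^{2}$ gives $\|\overline{\textbf{x}}_{i}^{t^{0}}-\overline{\textbf{v}}_{i}^{t^{0}}\|\le\|\textbf{x}^{t^{0}}-\textbf{v}^{t^{0}}\|$, so it suffices to produce some $\Theta^{t^{0}}\ge\|\textbf{x}^{t^{0}}-\textbf{v}^{t^{0}}\|$ at every $t^{0}\in\mathcal{I}_{E}$.

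Next I would invoke the standard single-step contraction of gradient descent for a $\mu$-strongly convex, $L$-smooth function: by co-coercivity, since $\alpha\le\tfrac{1}{2L}\le\tfrac{2}{\mu+L}$, one has $\|\textbf{x}_{i}-\alpha\nabla f_{i}(\textbf{x}_{i})-\textbf{v}_{i}^{*}\|\le\nu\|\textbf{x}_{i}-\textbf{v}_{i}^{*}\|$ with $\nu=\sqrt{1-2\alpha\lambda}\in(0,1)$ and $\lambda=\tfrac{\mu L}{\mu+L}$; stacked over agents this reads $\|\textbf{v}^{t+1}-\textbf{v}^{*}\|\le\nu\|\textbf{x}^{t}-\textbf{v}^{*}\|$. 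Within one epoch $[kE,(k+1)E]$ the correction term $\beta(\textbf{x}^{kE}-\textbf{v}^{kE})$ is frozen, so composing this contraction over the $E$ local updates and summing the geometric series generated by the repeatedly added correction gives the second row of \eqref{eq61}, namely $\|\textbf{v}^{(k+1)E}-\textbf{v}^{*}\|\le\nu^{E}\|\textbf{v}^{kE}-\textbf{v}^{*}\|+\tfrac{\beta\nu(1-\nu^{E})}{1-\nu}\|\textbf{x}^{kE}-\textbf{v}^{kE}\|$. For the first row I would expand the combination step $\textbf{x}^{(k+1)E}=\textbf{Z}\bigl(\textbf{v}^{(k+1)E}+\beta(\textbf{x}^{kE}-\textbf{v}^{kE})\bigr)$ as $\textbf{x}^{(k+1)E}-\textbf{v}^{(k+1)E}=(\textbf{Z}-\textbf{I})(\textbf{v}^{(k+1)E}-\textbf{v}^{*})+(\textbf{Z}-\textbf{I})\textbf{v}^{*}+\beta\textbf{Z}(\textbf{x}^{kE}-\textbf{v}^{kE})$; here the term $(\textbf{Z}-\textbf{I})\textbf{v}^{*}$ does not vanish precisely because the local minimizers $\textbf{v}_{i}^{*}$ disagree, and it contributes the constant $a_{13}=\|\textbf{Z}-\textbf{I}\|\|\textbf{v}^{*}\|$. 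Taking norms and substituting the bound just obtained for $\|\textbf{v}^{(k+1)E}-\textbf{v}^{*}\|$ yields the first row $(a_{11},a_{12},a_{13})$.

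Then I would assemble the two rows into a componentwise inequality $\textbf{s}^{k+1}\le M\textbf{s}^{k}+\textbf{q}$ for $\textbf{s}^{k}=(\|\textbf{x}^{kE}-\textbf{v}^{kE}\|,\|\textbf{v}^{kE}-\textbf{v}^{*}\|)^{\top}$, $M=[a_{ij}]_{i,j\le 2}$, $\textbf{q}=(a_{13},0)^{\top}$, with all entries nonnegative; defining $(\Theta^{kE},U^{kE})$ by the corresponding \emph{equality} recursion started from $\textbf{x}^{0}=\textbf{v}^{0}$, a monotone induction (using nonnegativity of $M$) gives $\|\textbf{x}^{kE}-\textbf{v}^{kE}\|\le\Theta^{kE}$. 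Finally I would solve the scalar recurrence for $\Theta^{kE}$: its characteristic roots are the two quadratic-formula values $x_{1},x_{2}$ in \eqref{eq62-1}, so $\Theta^{kE}=a\,x_{1}^{k}+b\,x_{2}^{k}+c$, and the constants $a,b,c$ are fixed by matching the three explicitly computable values $\Theta^{0},\Theta^{E},\Theta^{2E}$ in \eqref{eq62-4}, i.e.\ by the Vandermonde system \eqref{eq62-5}; with $t^{0}=kE$ this is \eqref{eq55}, and the reduction of the first paragraph closes the argument.

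The step I expect to be the main obstacle is the within-epoch propagation: one must compose the $E$ gradient-descent contractions with the additive but epoch-constant correction, keep track that the $j$-th added copy of $\beta(\textbf{x}^{kE}-\textbf{v}^{kE})$ is discounted by $\nu^{E-j}$, and sum the resulting geometric series to the clean coefficient $\tfrac{\beta\nu(1-\nu^{E})}{1-\nu}$ while keeping every residual term nonnegative so the monotone induction goes through; a secondary care point is verifying $\nu\in(0,1)$ from $\alpha\le\tfrac{1}{2L}$ and that the roots $x_{1},x_{2}$ are distinct and different from $1$, so that $a x_{1}^{k}+b x_{2}^{k}+c$ is indeed the general solution.
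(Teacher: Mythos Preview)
Your proposal is correct and follows essentially the same route as the paper: reduce the local averaged quantity to the stacked norm $\|\textbf{x}^{t^{0}}-\textbf{v}^{t^{0}}\|$, derive the one-step contraction $\|\textbf{v}^{t+1}-\textbf{v}^{*}\|\le\nu\|\textbf{x}^{t}-\textbf{v}^{*}\|$ from Nesterov's co-coercivity bound, telescope it over one epoch to get the $\tfrac{\beta\nu(1-\nu^{E})}{1-\nu}$ coefficient, combine with the decomposition $(\textbf{Z}-\textbf{I})(\textbf{v}^{(k+1)E}-\textbf{v}^{*})+(\textbf{Z}-\textbf{I})\textbf{v}^{*}+\beta\textbf{Z}(\textbf{x}^{kE}-\textbf{v}^{kE})$ to obtain the $2\times 2$ affine recursion, and solve via the characteristic quadratic and a Vandermonde match on $\Theta^{0},\Theta^{E},\Theta^{2E}$. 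Your explicit mention of the monotone induction via nonnegativity of $M$ and of the distinctness of $x_{1},x_{2}$ makes rigorous two points the paper defers to its supplement, but the architecture is the same.
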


\begin{proof}

We first bound the global difference $\|\textbf{x}^{t^{0}}-\textbf{v}^{t^{0}}\|$ at each combination step
\begin{equation}  \label{eq56}
\begin{aligned}
\|\textbf{x}^{t^{0}}-\textbf{v}^{t^{0}}\|&=\|\textbf{Z}(\textbf{v}^{t^{0}}+\beta(\textbf{x}^{t^{0}-E}-\textbf{v}^{t^{0}-E}))-\textbf{v}^{t^{0}}\|\\
&=\|(\textbf{Z}-\textbf{I})\textbf{v}^{t^{0}}+\beta\textbf{Z}(\textbf{x}^{t^{0}-E}-\textbf{v}^{t^{0}-E})\|\\
&=\|(\textbf{Z}-\textbf{I})(\textbf{v}^{t^{0}}-\textbf{v}^{*})+(\textbf{Z}-\textbf{I})\textbf{v}^{*}\\
&\;\;\;\;\;\;+\beta\textbf{Z}(\textbf{x}^{t^{0}-E}-\textbf{v}^{t^{0}-E})\|\\
&\leq\|\textbf{Z}-\textbf{I}\|\|\textbf{v}^{t^{0}}-\textbf{v}^{*}\|+\|\textbf{Z}-\textbf{I}\|\|\textbf{v}^{*}\|\\
&\;\;\;\;\;\;+\beta\|\textbf{Z}\|\|\textbf{x}^{t^{0}-E}-\textbf{v}^{t^{0}-E}\|.
\end{aligned}
\end{equation}
Here in the first equality, we use the second formula of combination update \eqref{eq52}. Moreover, it is known that  $\|\textbf{Z}\|\leq1$ and $\|\textbf{Z}-\textbf{I}\|\leq2$.

Next we analyze $\|\textbf{v}^{t^{0}}-\textbf{v}^{*}\|$. Based on the global update step \eqref{eq51}, we have
\begin{equation}  \label{eq57}
\begin{aligned}
\|\textbf{v}^{t^{0}}-\textbf{v}^{*}\|&=\|\textbf{x}^{t^{0}-1}-\alpha \textbf{g}^{t^{0}-1}-\textbf{v}^{*}\|\\
&=\sqrt{\sum_{i=1}^{N}\|\textbf{x}_{i}^{t^{0}-1}-\alpha \textbf{g}_{i}^{t^{0}-1}-\textbf{v}_{i}^{*}\|^{2}}\\
&\leq\sqrt{\sum_{i=1}^{N}(1-2\alpha\lambda )\|\textbf{x}_{i}^{t^{0}-1}-\textbf{v}_{i}^{*}\|^{2}}\\
&=\sqrt{1-2\alpha \lambda}\|\textbf{x}^{t^{0}-1}-\textbf{v}^{*}\|\\
&=\sqrt{1-2\alpha \lambda}\|\textbf{v}^{t^{0}-1}+\beta(\textbf{x}^{t^{0}-E}-\textbf{v}^{t^{0}-E})-\textbf{v}^{*}\|\\
&\leq\nu\big(\|\textbf{v}^{t^{0}-1}-\textbf{v}^{*}\|+\beta\|\textbf{x}^{t^{0}-E}-\textbf{v}^{t^{0}-E}\|\big),
\end{aligned}
\end{equation}
where the first inequality follows the standard result for the gradient descent method (Theorem 2.1.15 of \cite{nesterov2003introductory}), i.e., $\|\textbf{x}_{i}^{t^{0}-1}-\alpha \textbf{g}_{i}^{t^{0}-1}-\textbf{v}_{i}^{*}\|\leq\sqrt{1-2\alpha\lambda}\|\textbf{x}_{i}^{t^{0}-1}-\textbf{v}_{i}^{*}\|$ holds under $\alpha\leq\min\{\frac{1}{2L},\frac{2}{\mu+L}\}=\frac{1}{2L}$ and $ \lambda=\frac{\mu L}{\mu +L}$, the first combination update in \eqref{eq52} is used in the fourth equality.

By iteratively applying \eqref{eq57} for $E$ times, it can be obtained that
\begin{equation}  \label{eq58}
\begin{aligned}
\|\textbf{v}^{t^{0}}&-\textbf{v}^{*}\|\leq\nu^{E}\|\textbf{v}^{t^{0}-E}-\textbf{v}^{*}\|+\beta\sum_{s=1}^{E}\nu^{s}\|\textbf{x}^{t^{0}-E}-\textbf{v}^{t^{0}-E}\|\\
&=\nu^{E}\|\textbf{v}^{t^{0}-E}-\textbf{v}^{*}\|+\frac{\beta\nu(1-\nu^{E})}{1-\nu}\|\textbf{x}^{t^{0}-E}-\textbf{v}^{t^{0}-E}\|.
\end{aligned}
\end{equation}

Combining \eqref{eq56} and \eqref{eq58}, we can write
\begin{equation}  \label{eq59}
\begin{aligned}
\begin{cases}
\Phi^{(k+1)E}\leq\big[\frac{\nu(1-\nu^{E})}{1-\nu}\|\textbf{Z}-\textbf{I}\|+\|\textbf{Z}\|\big]\beta\Phi^{kE}\\
\;\;\;\;\;\;\;\;\;\;\;\;\;\;\;\;\;\;+\nu^{E}\|\textbf{Z}-\textbf{I}\|\Psi^{kE}+\|\textbf{Z}-\textbf{I}\|\|\textbf{v}^{*}\|\\
\Psi^{(k+1)E}\leq\frac{\beta\nu(1-\nu^{E})}{1-\nu}\Phi^{kE}+\nu^{E}\Psi^{kE}
\end{cases},
\end{aligned}
\end{equation}
where we define $\Phi^{kE}=\|\textbf{x}^{t^{0}}-\textbf{v}^{t^{0}}\|$ and $\Psi^{kE}=\|\textbf{v}^{t^{0}}-\textbf{v}^{*}\|$ by setting $t^{0}=kE$ with $k=0,1,2,\ldots,t^{0}/E$ in \eqref{eq56} and \eqref{eq58}. We can rewrite \eqref{eq59} as a bilinear recurrence relation with a  generic form as follows
\begin{equation}  \label{eq60}
\begin{aligned}
\begin{cases}
\Phi^{(k+1)E}\leq a_{11}\Phi^{kE}+a_{12}\Psi^{kE}+a_{13}\\
\Psi^{(k+1)E}\leq a_{21} \Phi^{kE}+a_{22}\Psi^{kE}+a_{23}
\end{cases}.
\end{aligned}
\end{equation}

Next, we aim to obtain a general expression of $\Phi^{kE}$. From the proof given in the supplementary document of this work, it is shown that the solution of \eqref{eq60} is determined by the roots of the following formulate
\begin{equation}  \label{eq62}
\begin{aligned}
x^{2}-(a_{11}+a_{22})x+(a_{11}a_{22}-a_{12}a_{21})=0.
\end{aligned}
\end{equation}
By solving \eqref{eq62}, we obtain its two different and nonnegative roots $x_{2}>x_{1}>0$ given in \eqref{eq62-1}.

It is noted that the radical expression of right hand of \eqref{eq62-1} is always valid under $\alpha\leq\frac{1}{2L}$. By giving the general formula of $\Theta^{t^{0}}$ as \eqref{eq55} with initialization \eqref{eq62-4} which is obtained from \eqref{eq59}, then we get the coefficients $a$, $b$, $c$ by solving the equations \eqref{eq62-5}.

Because of $\|\overline{\textbf{x}}_{i}^{t^{0}}-\overline{\textbf{v}}_{i}^{t^{0}}\|\leq\|\textbf{Z}\textbf{x}^{t^{0}}-\textbf{Z}\textbf{v}^{t^{0}}\|\leq\|\textbf{Z}\|\|\textbf{x}^{t^{0}}-\textbf{v}^{t^{0}}\|\leq\Theta^{t^{0}}$, we complete the proof. Moreover, from \eqref{eq59}, the individual disagreement is also bounded
\begin{equation}  \label{eq63}
\begin{aligned}
\|\textbf{x}_{i}^{t^{0}}-\textbf{v}_{i}^{t^{0}}\|\leq\|\textbf{x}^{t^{0}}-\textbf{v}^{t^{0}}\|\leq \Theta^{t^{0}},
\end{aligned}
\end{equation}
which will be used in subsequent analysis.
\end{proof}

The following corollary demonstrates that the bias correction is upper bounded by a constant.
\begin{corollary}\label{corollary1} Given by the relation \eqref{eq54} and the definition \eqref{eq55} in Lemma \ref{lem4}, for the exact MUSIC \eqref{eq49}-\eqref{eq50} with the number $E$ of local updates satisfying
\begin{equation}  \label{eq63-33}
\begin{aligned}
\nu^{E}\big(1-\frac{\beta\nu}{1-\nu}\|\textbf{Z}-\textbf{I}\|\big)\leq1-\frac{\beta\nu}{1-\nu}\|\textbf{Z}-\textbf{I}\|-\beta\|\textbf{Z}\|,
\end{aligned}
\end{equation}
the bias correction $\|\overline{\textbf{x}}_{i}^{t^{0}}-\overline{\textbf{v}}_{i}^{t^{0}}\|$ is bounded by a positive constant
\begin{equation}  \label{eq63-2}
\begin{aligned}
\|\overline{\textbf{x}}_{i}^{t^{0}}-\overline{\textbf{v}}_{i}^{t^{0}}\|\leq\Theta^{t^{0}}\leq\Theta.
\end{aligned}
\end{equation}

\end{corollary}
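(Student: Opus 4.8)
\textbf{Proof proposal for Corollary \ref{corollary1}.}

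The plan is to exploit the explicit closed form $\Theta^{t^{0}}=a(x_{1})^{t^{0}/E}+b(x_{2})^{t^{0}/E}+c$ from Lemma \ref{lem4} and show that, under condition \eqref{eq63-33}, this sequence is bounded uniformly in $t^{0}\in\mathcal{I}_{E}$. Writing $k=t^{0}/E$, the sequence $\Theta^{kE}$ is governed by the scalar recurrence obtained from the two-dimensional system \eqref{eq60}; its asymptotic behavior is dictated by the larger root $x_{2}$ of the characteristic polynomial \eqref{eq62}. Hence the key claim to establish is that $x_{2}\le 1$ (in fact $x_{2}<1$, with $x_{1}<1$ automatically since $0<x_{1}<x_{2}$), for then $a(x_{1})^{k}+b(x_{2})^{k}\to 0$ (or stays bounded) as $k\to\infty$ and the constant term $c$ furnishes the bound $\Theta$.

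First I would unpack the condition \eqref{eq63-33}. Writing $m\triangleq\dfrac{\beta\nu}{1-\nu}\|\textbf{Z}-\textbf{I}\|\ge 0$ and $n\triangleq\beta\|\textbf{Z}\|\ge 0$, the hypothesis reads $\nu^{E}(1-m)\le 1-m-n$, i.e. $\nu^{E}a_{22}+\text{(cross terms)}$ line up so that $a_{11}+a_{22}\le 1+(a_{11}a_{22}-a_{12}a_{21})$. Indeed, from \eqref{eq61} one has $a_{11}=m+n$, $a_{22}=\nu^{E}$, $a_{12}=\nu^{E}\|\textbf{Z}-\textbf{I}\|$, $a_{21}=\dfrac{\beta\nu(1-\nu^{E})}{1-\nu}=m/\|\textbf{Z}-\textbf{I}\|$ (when $\|\textbf{Z}-\textbf{I}\|\ne 0$), so that $a_{12}a_{21}=\nu^{E}m$. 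Therefore $a_{11}a_{22}-a_{12}a_{21}=(m+n)\nu^{E}-m\nu^{E}=n\nu^{E}$, and the quantity $1-(a_{11}+a_{22})+(a_{11}a_{22}-a_{12}a_{21})$ equals $1-m-n-\nu^{E}+n\nu^{E}=(1-m-n)-\nu^{E}(1-n)$. A short manipulation shows \eqref{eq63-33} is precisely the statement that this expression is $\ge 0$. Evaluating the characteristic polynomial $q(x)=x^{2}-(a_{11}+a_{22})x+(a_{11}a_{22}-a_{12}a_{21})$ at $x=1$ gives $q(1)=1-(a_{11}+a_{22})+(a_{11}a_{22}-a_{12}a_{21})\ge 0$; combined with the fact that the vertex of the parabola sits at $x=(a_{11}+a_{22})/2$ and that one can check $(a_{11}+a_{22})/2\le 1$ under the same hypothesis, this forces both roots to lie in $(0,1]$, hence $x_{2}\le 1$.

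Then I would conclude as follows: since $0<x_{1}\le x_{2}\le 1$, the powers $(x_{1})^{k}$ and $(x_{2})^{k}$ are bounded by $1$ for all $k\ge 0$, so $\Theta^{kE}=a(x_{1})^{k}+b(x_{2})^{k}+c$ is bounded by $\Theta\triangleq|a|+|b|+|c|$ (or, more sharply, by $\max_{k}\Theta^{kE}$, which is finite and attained since the sequence is monotone once the transient decays). Invoking \eqref{eq54} and \eqref{eq63} from Lemma \ref{lem4}, $\|\overline{\textbf{x}}_{i}^{t^{0}}-\overline{\textbf{v}}_{i}^{t^{0}}\|\le\Theta^{t^{0}}\le\Theta$, which is \eqref{eq63-2}. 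The main obstacle I anticipate is the sign/root-location bookkeeping in the middle step: one must verify not only $q(1)\ge 0$ but also that the second root does not exceed $1$ (a parabola with $q(1)\ge0$ could still have a root $>1$ if the vertex lies to the right of $1$), so the argument needs the supplementary inequality $(a_{11}+a_{22})/2\le1$, which must itself be derived from \eqref{eq63-33} and the bounds $\|\textbf{Z}\|\le1$, $\|\textbf{Z}-\textbf{I}\|\le2$, $\nu\in(0,1)$; also one should dispose of the degenerate case $\|\textbf{Z}-\textbf{I}\|=0$ (i.e. $\textbf{Z}=\textbf{I}$) separately, where the recurrence collapses and the bound is immediate. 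Positivity of the coefficients $a,b,c$ obtained from \eqref{eq62-5} — or at least control of their signs so that $\Theta$ is a genuine upper bound rather than merely a limiting value — is the remaining delicate point, handled by noting $\Theta^{0}=0\le\Theta^{E}\le\Theta^{2E}$ and solving the Vandermonde system \eqref{eq62-5} explicitly.
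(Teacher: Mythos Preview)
The paper itself defers the proof of Corollary \ref{corollary1} to a supplementary document, so a line-by-line comparison with the authors' argument is not possible from the source provided. That said, your strategy --- locating the larger root $x_{2}$ of the characteristic quadratic \eqref{eq62} in $(0,1]$ by evaluating $q(1)$ and controlling the vertex --- is exactly the natural approach and is almost certainly what the supplement does.

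Two computational points need repair. First, from \eqref{eq61} one has
\[
a_{11}=\frac{\beta\nu(1-\nu^{E})}{1-\nu}\|\textbf{Z}-\textbf{I}\|+\beta\|\textbf{Z}\|=m(1-\nu^{E})+n,
\]
not $a_{11}=m+n$ as you wrote; the factor $(1-\nu^{E})$ is present in the first summand. With the correct $a_{11}$ one still gets $a_{11}a_{22}-a_{12}a_{21}=n\nu^{E}$, but now
\[
q(1)=1-(a_{11}+a_{22})+(a_{11}a_{22}-a_{12}a_{21})=(1-\nu^{E})(1-m-n),
\]
which is cleaner than your expression. Second, condition \eqref{eq63-33} rewrites as $(1-m)(1-\nu^{E})\ge n$, which forces $1-m\ge 0$ (since $n\ge 0$, $1-\nu^{E}>0$) and then $1-m-n\ge(1-m)\nu^{E}\ge 0$; hence \eqref{eq63-33} \emph{implies} $q(1)\ge 0$ but is not ``precisely'' that statement --- it is a mildly stronger sufficient condition.

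Your anticipated obstacle (ruling out both roots exceeding $1$) can be handled more directly than by a vertex estimate: the product of the roots equals $a_{11}a_{22}-a_{12}a_{21}=n\nu^{E}=\beta\|\textbf{Z}\|\nu^{E}<1$, so both roots cannot simultaneously be at least $1$ unless both equal $1$, which is excluded by $n\nu^{E}<1$. Combined with $q(1)\ge 0$ this gives $0<x_{1}\le x_{2}\le 1$ immediately, and your bound $\Theta=|a|+|b|+|c|$ (or the limit $c$ when $x_{2}<1$) completes the argument. The degenerate case $\|\textbf{Z}-\textbf{I}\|=0$ you flag is indeed harmless: then $a_{12}=a_{13}=0$ and $\Phi^{kE}\le\beta\|\textbf{Z}\|\Phi^{(k-1)E}$ decays geometrically on its own.
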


\begin{proof}

Please see Section II in the supplementary document of this work.

\end{proof}

Corollary \ref{corollary1} shows that the bias correction at any combination step $t^{0}\in \mathcal{I}_{E}$ is bounded when the number of local updates is finite. Further, Corollary \ref{corollary1} also implies that there exists some $\theta>0$ such that if $\|\overline{\textbf{x}}_{i}^{t^{0}}-\overline{\textbf{v}}_{i}^{t^{0}}\|>0$, then
\begin{equation}  \label{eq63-1}
\begin{aligned}
\theta\leq\|\overline{\textbf{x}}_{i}^{t^{0}}-\overline{\textbf{v}}_{i}^{t^{0}}\|\leq\Theta.
\end{aligned}
\end{equation}

Next, we bound $\|\textbf{x}_{j}^{t}-\overline{\textbf{x}}_{j}^{t}\|$ by the following Lemma.

\begin{lemma}\label{lem5}\textbf{(Bounded deviation $\|\textbf{x}_{j}^{t}-\overline{\textbf{x}}_{j}^{t}\|$)}
Under Assumption \ref{assum3}, for the exact MUSIC \eqref{eq49}-\eqref{eq50}, it follows that
\begin{equation}  \label{eq64}
\begin{aligned}
\|\textbf{x}_{j}^{t}-\overline{\textbf{x}}_{j}^{t}\|^{2}\leq4(t-t^{0})^{2}\Gamma
\end{aligned}
\end{equation}
with $0\leq t-t^{0}\leq E-1$, where $\Gamma=\alpha^{2}G_{max}^{2}+\Theta^{2}+\frac{\alpha G_{max}^{2}}{\mu}-\frac{\alpha G_{min}^{2}}{L}-\mu\alpha\beta^{2}\theta^{2}$.
\end{lemma}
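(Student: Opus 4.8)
\textbf{Proof proposal for Lemma \ref{lem5}.}

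The plan is to mimic the structure of the proof of Lemma \ref{lem2}, but now accounting for the extra bias-correction term $\beta(\textbf{x}^{t^{0}}_{j}-\textbf{v}^{t^{0}}_{j})$ that appears in the inner-loop update \eqref{eq50}. First I would write, for $t^{0}\leq t\leq t^{0}+E-1$, the telescoping sum for $\textbf{x}_{j}^{t}-\textbf{x}_{j}^{t^{0}}$. Since $\textbf{x}_{j}^{t^{0}}=\overline{\textbf{x}}_{j}^{t^{0}}$ at a combination step, bounding $\|\textbf{x}_{j}^{t}-\overline{\textbf{x}}_{j}^{t}\|$ reduces, via the triangle inequality $\|\textbf{x}_{j}^{t}-\overline{\textbf{x}}_{j}^{t}\|\leq\|\textbf{x}_{j}^{t}-\textbf{x}_{j}^{t^{0}}\|+\|\overline{\textbf{x}}_{j}^{t}-\textbf{x}_{j}^{t^{0}}\|$, to bounding the drift of the local iterate and of its weighted average away from the last synchronized point, exactly as in \eqref{eq25}. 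Each single inner step now contributes $-\alpha\nabla f_{j}(\textbf{x}_{j}^{s})+\beta(\textbf{x}^{t^{0}}_{j}-\textbf{v}^{t^{0}}_{j})$, so after summing $s$ steps I get a term of order $\alpha s G_{max}$ from the gradients (Assumption \ref{assum3}) and a term $\beta s\|\textbf{x}^{t^{0}}_{j}-\textbf{v}^{t^{0}}_{j}\|$ from the correction; the latter is controlled by $\Theta$ through \eqref{eq63} (or by \eqref{eq63-1} if one wants the lower bound $\theta$ to enter). Then I would square, use $(\|\textbf{a}\|+\|\textbf{b}\|)^{2}\le 2\|\textbf{a}\|^{2}+2\|\textbf{b}\|^{2}$ repeatedly, and collect everything into the common factor $(t-t^{0})^{2}$.

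The delicate point is producing exactly the quantity $\Gamma=\alpha^{2}G_{max}^{2}+\Theta^{2}+\frac{\alpha G_{max}^{2}}{\mu}-\frac{\alpha G_{min}^{2}}{L}-\mu\alpha\beta^{2}\theta^{2}$, whose last three summands are not the naive outcome of crude norm bounds. I expect that the terms $\frac{\alpha G_{max}^{2}}{\mu}$ and $-\frac{\alpha G_{min}^{2}}{L}$ come from invoking Assumptions \ref{assum1} and \ref{assum2} in the sharper form \eqref{eq10-1} and \eqref{eq12}, i.e. replacing $\|\nabla f_j\|^2$ by $2\mu(f_j-f_j^*)$ from below and $2L(f_j-f_j^*)$ from above, so that a cross term between a gradient contribution and a function-gap contribution can be rewritten using these two-sided bounds; the sign pattern ($+1/\mu$ versus $-1/L$, consistent with $\mu\le L$) is what makes the net estimate valid. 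The term $-\mu\alpha\beta^{2}\theta^{2}$ should be the ``savings'' one extracts from the correction step: because the correction drives the local estimate toward the consensus value, part of what would naively be an additive error is in fact subtracted, and $\theta$ from \eqref{eq63-1} quantifies the minimal guaranteed size of that correction. I would therefore keep the correction term with its sign rather than bounding it away, and combine it with a $\mu\alpha$-factor coming from a strong-convexity step analogous to \eqref{eq17}.

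Concretely, the key steps in order are: (i) telescope the inner recursion \eqref{eq49}--\eqref{eq50} over $s=0,\dots,t-t^0-1$ to express $\textbf{x}_j^t-\textbf{x}_j^{t^0}$ (and likewise its $\overline{w}$-weighted average) as a sum of per-step increments; (ii) bound each increment's gradient part by $G_{max}$ and $G_{min}$ via Assumption \ref{assum3}, and its correction part via Lemma \ref{lem4}/Corollary \ref{corollary1}, i.e. \eqref{eq63} and \eqref{eq63-1}; (iii) apply the triangle inequality and $\|\overline{\textbf{x}}_j^t-\textbf{x}_j^{t^0}\|\le\|\textbf{x}^t-\textbf{x}^{t^0}\|$-type reductions, using $\|\textbf{Z}\|\le 1$, to reduce to the single-agent drift; (iv) square and expand, at which point the cross terms are rewritten using the two-sided gradient inequalities \eqref{eq10-1} and \eqref{eq12} together with a strong-convexity inequality of the form \eqref{eq17}, so that the function-gap quantities $f_j(\textbf{x}_j^s)-f_j^*$ cancel and leave the constants $\frac{\alpha G_{max}^2}{\mu}$, $-\frac{\alpha G_{min}^2}{L}$; (v) collect the common factor $(t-t^0)^2$ and identify the residual constant as $4\Gamma$. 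The main obstacle will be step (iv): getting the bookkeeping of cross terms to close \emph{exactly} into $\Gamma$ — in particular justifying that $\Gamma\ge 0$ (so the bound is not vacuous), which presumably needs the step-size restriction $\alpha\le\frac{1}{2L}$ and the relation $\theta\le\Theta$ from \eqref{eq63-1}, plus possibly a mild condition on $E$ inherited from Corollary \ref{corollary1}.
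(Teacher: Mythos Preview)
Your overall plan is the paper's proof: telescope the inner recursion, split $\|\textbf{x}_j^t-\overline{\textbf{x}}_j^t\|$ via the triangle inequality through $\textbf{x}_j^{t^0}=\overline{\textbf{x}}_j^{t^0}$, bound the weighted-average piece by Jensen (your $\|\textbf{Z}\|\le 1$ reduction plays the same role), square, and collect $(t-t^0)^2$ with the factor $4$ from $(a+b)^2\le 2a^2+2b^2$.

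The piece you flag as the ``main obstacle'' in step (iv) closes once you notice the identity that the first branch of \eqref{eq50} gives for every inner step $s\in\{t^0+1,\dots,t^0+E-1\}$:
\[
\beta(\textbf{x}_j^{t^0}-\textbf{v}_j^{t^0})=\textbf{x}_j^{s}-\textbf{v}_j^{s}.
\]
With this, the cross term in the expansion of $\|\textbf{x}_j^t-\textbf{x}_j^{t^0}\|^2$ is $-2\alpha(t-t^0)\sum_s\langle\nabla f_j(\textbf{x}_j^s),\textbf{x}_j^s-\textbf{v}_j^s\rangle$, and strong convexity \emph{at $\textbf{x}_j^s$ comparing to $\textbf{v}_j^s$} (not the form \eqref{eq17}, which anchors at $\textbf{x}^*$) gives
\[
\langle\nabla f_j(\textbf{x}_j^s),\textbf{x}_j^s-\textbf{v}_j^s\rangle\ \ge\ f_j(\textbf{x}_j^s)-f_j(\textbf{v}_j^s)+\tfrac{\mu}{2}\|\textbf{x}_j^s-\textbf{v}_j^s\|^2.
\]
The function gaps do \emph{not} cancel: you split $f_j(\textbf{x}_j^s)-f_j(\textbf{v}_j^s)=\bigl(f_j(\textbf{x}_j^s)-f_j^*\bigr)-\bigl(f_j(\textbf{v}_j^s)-f_j^*\bigr)$, lower-bound the first bracket by $\tfrac{1}{2L}\|\nabla f_j(\textbf{x}_j^s)\|^2\ge\tfrac{G_{\min}^2}{2L}$ via \eqref{eq12}, upper-bound the second by $\tfrac{1}{2\mu}\|\nabla f_j(\textbf{v}_j^s)\|^2\le\tfrac{G_{\max}^2}{2\mu}$ via \eqref{eq10-1}, and bound the quadratic term below by $\tfrac{\mu\beta^2}{2}\theta^2$ using \eqref{eq63-1}. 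This yields the per-step estimate $-H_1\le\tfrac{G_{\max}^2}{2\mu}-\tfrac{G_{\min}^2}{2L}-\tfrac{\mu\beta^2\theta^2}{2}$; multiplying by $2\alpha(t-t^0)^2$ and adding the diagonal pieces $\alpha^2(t-t^0)^2G_{\max}^2$ and $\beta^2(t-t^0)^2\Theta^2$ (the latter from \eqref{eq63}) gives exactly $(t-t^0)^2\Gamma$.
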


\begin{proof}

Due to $\textbf{x}_{j}^{t^{0}}=\overline{\textbf{x}}_{j}^{t^{0}}$ for any $t^{0}\in\mathcal{I}_{E}$ (i.e., $t=t^{0}$), based on the combination step in \eqref{eq50}, the inequality \eqref{eq64} always holds. In the case of  $1\leq t-t^{0}\leq E-1$, it holds that
\begin{equation}  \label{eq65}
\begin{aligned}
\|\textbf{x}_{j}^{t}-\overline{\textbf{x}}_{j}^{t}\|\leq\|\textbf{x}_{j}^{t}-\textbf{x}_{j}^{t^{0}}\|+\|\overline{\textbf{x}}_{j}^{t}-\textbf{x}_{j}^{t^{0}}\|.
\end{aligned}
\end{equation}
We first bound the first term of \eqref{eq65}. For the inner loop iterations from $t^{0}$ to $t$, based on the exact update \eqref{eq49}-\eqref{eq50}, we have
\begin{equation}  \label{eq66}
\begin{aligned}
&\textbf{x}_{j}^{t^{0}+1}=\textbf{x}_{j}^{t^{0}}-\alpha\nabla f_{j}(\textbf{x}_{j}^{t^{0}})+\beta(\textbf{x}_{j}^{t^{0}}-\textbf{v}_{j}^{t^{0}}),\\
&\textbf{x}_{j}^{t^{0}+2}=\textbf{x}_{j}^{t^{0}+1}-\alpha\nabla f_{j}(\textbf{x}_{j}^{t^{0}+1})+\beta(\textbf{x}_{j}^{t^{0}}-\textbf{v}_{j}^{t^{0}}),\\
&\;\;\;\;\;\;\;\;\;\;\vdots\\
&\textbf{x}_{j}^{t}=\textbf{x}_{j}^{t-1}-\alpha\nabla f_{j}(\textbf{x}_{j}^{t-1})+\beta(\textbf{x}_{j}^{t^{0}}-\textbf{v}_{j}^{t^{0}}).
\end{aligned}
\end{equation}
By summing over \eqref{eq66}, it follows that
\begin{equation}  \label{eq67}
\begin{aligned}
\textbf{x}_{j}^{t}-\textbf{x}_{j}^{t^{0}}=-\alpha\sum\limits_{s=t^{0}}^{t}\nabla f_{j}(\textbf{x}_{j}^{s})+\beta(t-t^{0})(\textbf{x}_{j}^{t^{0}}-\textbf{v}_{j}^{t^{0}}).
\end{aligned}
\end{equation}
Taking  the squared 2-norm on \eqref{eq67}, it follows that
\begin{equation}  \label{eq68}
\begin{aligned}
\big\|\textbf{x}_{j}^{t}-\textbf{x}_{j}^{t^{0}}\big\|^{2}=&\big\|\alpha\sum\limits_{s=t^{0}}^{t}\nabla f_{j}(\textbf{x}_{j}^{s})\big\|^{2}+\big\|\beta(t-t^{0})(\textbf{x}_{j}^{t^{0}}-\textbf{v}_{j}^{t^{0}})\big\|^{2}\\
&-2 \langle\alpha\sum\limits_{s=t^{0}}^{t}\nabla f_{j}(\textbf{x}_{j}^{s}),\beta(t-t^{0})(\textbf{x}_{j}^{t^{0}}-\textbf{v}_{j}^{t^{0}})\rangle\\
=&\big\|\alpha\sum\limits_{s=t^{0}}^{t}\nabla f_{j}(\textbf{x}_{j}^{s})\big\|^{2}+\big\|\beta(t-t^{0})(\textbf{x}_{j}^{t^{0}}-\textbf{v}_{j}^{t^{0}})\big\|^{2}\\
&-2 \alpha(t-t^{0})\sum\limits_{s=t^{0}}^{t}\underbrace{\langle\nabla f_{j}(\textbf{x}_{j}^{s}),\beta(\textbf{x}_{j}^{t^{0}}-\textbf{v}_{j}^{t^{0}})\rangle}\limits_{H_{1}}.\\
\end{aligned}
\end{equation}
Note that $\textbf{x}_{j}^{t}=\textbf{v}_{j}^{t}+\beta(\textbf{x}^{t^{0}}_{j}-\textbf{v}^{t^{0}}_{j})$ for $1\leq t-t^{0}\leq E-1$. By the $\mu$-strong convexity of $f_{i}$ based on Assumption \ref{assum1}, we have
\begin{equation}  \label{eq68-1}
\begin{aligned}
H_{1}&=\langle\nabla f_{j}(\textbf{v}_{j}^{s}+\beta(\textbf{x}^{t^{0}}_{j}-\textbf{v}^{t^{0}}_{j})),\beta(\textbf{x}_{j}^{t^{0}}-\textbf{v}_{j}^{t^{0}})\rangle\\
&\geq f_{j}(\textbf{v}_{j}^{s}+\beta(\textbf{x}^{t^{0}}_{j}-\textbf{v}^{t^{0}}_{j}))-f_{j}(\textbf{v}_{j}^{s})+\frac{\mu}{2}\|\beta(\textbf{x}^{t^{0}}_{j}-\textbf{v}^{t^{0}}_{j})\|^{2}\\
&=f_{j}(\textbf{x}_{j}^{s})-f_{j}^{*}+f_{j}^{*}-f_{j}(\textbf{v}_{j}^{s})+\frac{\mu\beta^{2}}{2}\|\textbf{x}^{t^{0}}_{j}-\textbf{v}^{t^{0}}_{j}\|^{2}\\
&\geq \frac{1}{2L}\|\nabla f_{j}(\textbf{x}_{j}^{s})\|^{2}-\frac{1}{2\mu}\|\nabla f_{j}(\textbf{v}_{j}^{s})\|^{2}+\frac{\mu\beta^{2}}{2}\|\textbf{x}^{t^{0}}_{j}-\textbf{v}^{t^{0}}_{j}\|^{2},
\end{aligned}
\end{equation}
which leads to
\begin{equation}  \label{eq68-2}
\begin{aligned}
-H_{1}\leq \frac{G_{max}^{2}}{2\mu}-\frac{G_{min}^{2}}{2L}-\frac{\mu\beta^{2}\theta^{2}}{2},
\end{aligned}
\end{equation}
where the first and second inequalities in \eqref{eq68-1} use Assumption \ref{assum1} and Assumption \ref{assum2}, \eqref{eq68-2} results from Assumption \ref{assum3} and \eqref{eq63-1}. Substituting \eqref{eq68-2} into \eqref{eq68}, we obtain
\begin{equation}  \label{eq68-3}
\begin{aligned}
\big\|\textbf{x}_{j}^{t}-\textbf{x}_{j}^{t^{0}}\big\|^{2}&\leq\alpha^{2}(t-t^{0})^{2}G_{max}^{2}+\beta^{2}(t-t^{0})^{2}\Theta^{2}\\
&\;\;+2\alpha (t-t^{0})^{2}\big(\frac{G_{max}^{2}}{2\mu}-\frac{G_{min}^{2}}{2L}-\frac{\mu\beta^{2}\theta^{2}}{2}\big)\\
&=(t-t^{0})^{2}\Gamma,
\end{aligned}
\end{equation}
where we use Assumption \ref{assum3}  and \eqref{eq63-1} again.

Following the weighted summation way for \eqref{eq66}, we have
\begin{equation}  \label{eq69}
\begin{aligned}
\big\|\overline{\textbf{x}}_{j}^{t}-\overline{\textbf{x}}_{j}^{t^{0}}\big\|^{2}&=\bigg\|\sum\limits_{l=1}^{N}\overline{w}_{jl}(\textbf{x}_{l}^{t}-\textbf{x}_{l}^{t^{0}})\bigg\|^{2}\\
&\leq\sum\limits_{l=1}^{N}\overline{w}_{jl}\|\textbf{x}_{l}^{t}-\textbf{x}_{l}^{t^{0}}\|^{2}\leq(t-t^{0})^{2}\Gamma,
\end{aligned}
\end{equation}
where the first inequality is based on  Jensen inequality and the second  inequality from the previous result \eqref{eq68-3}.

Combining $\textbf{x}_{j}^{t^{0}}=\overline{\textbf{x}}_{j}^{t^{0}}$, we further obtain
\begin{equation}  \label{eq70}
\begin{aligned}
\big\|\overline{\textbf{x}}_{j}^{t}-\textbf{x}_{j}^{t^{0}}\big\|^{2}\leq(t-t^{0})^{2}\Gamma.
\end{aligned}
\end{equation}
Substituting \eqref{eq68-3} and \eqref{eq70} into the squared form of \eqref{eq65} completes the proof.

\end{proof}

We use the same analysis method,, which is similar to that used in the previous Lemma \ref{lem3}, to immediately obtain the following result.
\begin{lemma}\label{lem6}\textbf{(Bounded disagreement $\|\textbf{x}_{j}^{t}-\overline{\textbf{x}}_{i}^{t}\|$)}
Under Assumption \ref{assum4}, for the exact MUSIC \eqref{eq49}-\eqref{eq50}, it follows that
\begin{equation}  \label{eq71}
\begin{aligned}
\|\textbf{x}_{j}^{t}-\overline{\textbf{x}}_{i}^{t}\|^{2}\leq(4(t-t^{0})\sqrt{\Gamma}+\varepsilon)^{2}
\end{aligned}
\end{equation}
for $0\leq t-t^{0}\leq E-1$.
\end{lemma}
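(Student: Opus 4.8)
The plan is to mimic exactly the decomposition used in Lemma \ref{lem3}, but now carrying squared norms since Lemma \ref{lem5} is stated in squared form. First I would write, for any two agents $i$ and $j$ and any $t$ with $0\leq t-t^{0}\leq E-1$,
\begin{equation} \label{eq71-a}
\|\textbf{x}_{j}^{t}-\overline{\textbf{x}}_{i}^{t}\|\leq\|\textbf{x}_{j}^{t}-\overline{\textbf{x}}_{j}^{t}\|+\|\overline{\textbf{x}}_{j}^{t}-\overline{\textbf{x}}_{i}^{t}\|,
\end{equation}
by inserting $\pm\overline{\textbf{x}}_{j}^{t}$ and applying the triangle inequality, precisely as in \eqref{eq33}. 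The first term on the right is controlled by Lemma \ref{lem5}, giving $\|\textbf{x}_{j}^{t}-\overline{\textbf{x}}_{j}^{t}\|\leq 2(t-t^{0})\sqrt{\Gamma}$.

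Next I would bound $\|\overline{\textbf{x}}_{j}^{t}-\overline{\textbf{x}}_{i}^{t}\|$ by splitting it through the last combination time $t^{0}$, as in \eqref{eq34}: writing
\begin{equation} \label{eq71-b}
\|\overline{\textbf{x}}_{j}^{t}-\overline{\textbf{x}}_{i}^{t}\|\leq\|\overline{\textbf{x}}_{j}^{t}-\textbf{x}_{j}^{t^{0}}\|+\|\textbf{x}_{i}^{t^{0}}-\overline{\textbf{x}}_{i}^{t}\|+\|\textbf{x}_{j}^{t^{0}}-\textbf{x}_{i}^{t^{0}}\|,
\end{equation}
where I have used $\textbf{x}_{j}^{t^{0}}=\overline{\textbf{x}}_{j}^{t^{0}}$ and $\textbf{x}_{i}^{t^{0}}=\overline{\textbf{x}}_{i}^{t^{0}}$. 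The first two terms are each bounded by $(t-t^{0})\sqrt{\Gamma}$ via inequality \eqref{eq70} (applied to agent $j$ and agent $i$ respectively), and the last term is bounded by $\varepsilon$ under Assumption \ref{assum4}. Summing gives $\|\overline{\textbf{x}}_{j}^{t}-\overline{\textbf{x}}_{i}^{t}\|\leq 2(t-t^{0})\sqrt{\Gamma}+\varepsilon$. Combining this with the bound on the first term of \eqref{eq71-a} yields $\|\textbf{x}_{j}^{t}-\overline{\textbf{x}}_{i}^{t}\|\leq 4(t-t^{0})\sqrt{\Gamma}+\varepsilon$, and squaring both sides gives \eqref{eq71}.

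There is essentially no serious obstacle here; the only mild care point is bookkeeping with square roots versus squares, since Lemma \ref{lem5} delivers $\|\textbf{x}_{j}^{t}-\overline{\textbf{x}}_{j}^{t}\|^{2}\leq 4(t-t^{0})^{2}\Gamma$ and \eqref{eq70} delivers $\|\overline{\textbf{x}}_{j}^{t}-\textbf{x}_{j}^{t^{0}}\|^{2}\leq (t-t^{0})^{2}\Gamma$, so one must take square roots before applying the triangle inequalities and only square again at the very end. One should also note the edge case $t=t^{0}$, where $\textbf{x}_{j}^{t^{0}}=\overline{\textbf{x}}_{j}^{t^{0}}$ reduces the left side to $\|\textbf{x}_{j}^{t^{0}}-\textbf{x}_{i}^{t^{0}}\|\leq\varepsilon$, consistent with \eqref{eq71}. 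Hence the lemma follows immediately from Lemmas \ref{lem5} and Assumption \ref{assum4} in the same manner as Lemma \ref{lem3} followed from Lemma \ref{lem2}.
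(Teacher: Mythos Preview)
Your proposal is correct and follows exactly the approach the paper intends: the paper omits the detailed proof of Lemma~\ref{lem6} and simply states that it is obtained by the same analysis method as Lemma~\ref{lem3}, which is precisely the decomposition \eqref{eq33}--\eqref{eq34} you reproduce, with the bounds $2(t-t^{0})\sqrt{\Gamma}$ and $(t-t^{0})\sqrt{\Gamma}$ supplied by Lemma~\ref{lem5} and \eqref{eq70} in place of Lemma~\ref{lem2} and \eqref{eq31}. Your remarks about taking square roots before applying the triangle inequality and handling the edge case $t=t^{0}$ are appropriate and complete the argument.
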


 We now provide a convergence result for the exact MUSIC.

\begin{theorem}\label{them2} Let Assumptions \ref{assum1}-\ref{assum4} and $\alpha\leq\frac{1}{2L}$ hold. If $E$ and $\beta$ satisfy \eqref{eq63-33}, then the exact MUSIC \eqref{eq49}-\eqref{eq50} converges linearly in the mean-square sense to a neighborhood of the optimum solution:
\begin{equation}  \label{eq72}
\begin{aligned}
\big\|\overline{\textbf{x}}_{i}^{kE}-\textbf{x}^{*}\big\|^{2}\leq(1-\mu\alpha)^{kE}\big\|\overline{\textbf{x}}_{i}^{0}-\textbf{x}^{*}\big\|^{2}+D_{3}
\end{aligned}
\end{equation}
for $k=1,2,\ldots,\lfloor T/E\rfloor$, where $E$ satisfies \eqref{eq63-33},
\begin{equation}  \label{eq72-1}
\begin{aligned}
D_{3}&=\frac{(1-(1-\mu\alpha)^{kE})}{1-(1-\mu\alpha)^{E}}\sum\limits_{s=0}^{E-1}(\zeta^{E-1-s}-\theta^{2})(1-\mu\alpha)^{s}\\
&\underrightarrow{k\rightarrow\infty}\;\mathcal{O}\bigg(\frac{(E-1)^{2}\Gamma(16+4\gamma)+2\alpha\tau-\theta^{2}}{\mu\alpha}\bigg),
\end{aligned}
\end{equation}
$\zeta^{s}=(4s\sqrt{\Gamma}+\varepsilon)^{2}+4\gamma s^{2}\Gamma+2\alpha\tau$.
\end{theorem}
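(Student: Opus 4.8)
The plan is to reproduce the epoch-based unrolling used in the proof of Theorem~\ref{them1} almost verbatim, with the four lemmas of the exact case (Lemma~\ref{lem1}, Corollary~\ref{corollary1}, Lemma~\ref{lem5}, Lemma~\ref{lem6}) playing the roles that Lemmas~\ref{lem1}--\ref{lem3} played before. Concretely, I would first observe that the gradient-descent half-step \eqref{eq49} is identical to \eqref{eq8}, so Lemma~\ref{lem1} applies unchanged and bounds $\|\overline{\textbf{v}}_{i}^{t+1}-\textbf{x}^{*}\|^{2}$ by the right-hand side of \eqref{eq13}. Then I would feed in the exact-case deviation bounds: from Lemma~\ref{lem5}, $\sum_{j}\overline{w}_{ij}\|\textbf{x}_{j}^{t}-\overline{\textbf{x}}_{j}^{t}\|^{2}\le 4(t-t^{0})^{2}\Gamma$, and from Lemma~\ref{lem6}, $\sum_{j}\overline{w}_{ij}\|\textbf{x}_{j}^{t}-\overline{\textbf{x}}_{i}^{t}\|^{2}\le (4(t-t^{0})\sqrt{\Gamma}+\varepsilon)^{2}$. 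Substituting these into \eqref{eq13} collapses the three deviation terms into the single quantity $\zeta^{t-t^{0}}=(4(t-t^{0})\sqrt{\Gamma}+\varepsilon)^{2}+4\gamma(t-t^{0})^{2}\Gamma+2\alpha\tau$, which gives $\|\overline{\textbf{v}}_{i}^{t+1}-\textbf{x}^{*}\|^{2}\le(1-\mu\alpha)\|\overline{\textbf{x}}_{i}^{t}-\textbf{x}^{*}\|^{2}+\zeta^{t-t^{0}}$, the exact-case analogue of \eqref{eq36}.

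The genuinely new step is converting this bound on $\|\overline{\textbf{v}}_{i}^{t+1}-\textbf{x}^{*}\|^{2}$ into one on $\|\overline{\textbf{x}}_{i}^{t+1}-\textbf{x}^{*}\|^{2}$, since the two now differ by the correction term $\beta(\overline{\textbf{x}}_{i}^{t^{0}}-\overline{\textbf{v}}_{i}^{t^{0}})$ recorded in \eqref{eq53}. Here I would invoke the two-sided estimate $\theta\le\|\overline{\textbf{x}}_{i}^{t^{0}}-\overline{\textbf{v}}_{i}^{t^{0}}\|\le\Theta$ from Corollary~\ref{corollary1} and \eqref{eq63-1}: the correction is the exact-diffusion bias-cancellation increment, so expanding $\|\overline{\textbf{x}}_{i}^{t+1}-\textbf{x}^{*}\|^{2}$ and exploiting that the correction vector is (on average) anti-aligned with $\overline{\textbf{v}}_{i}^{t+1}-\textbf{x}^{*}$ should yield a per-step gain of order $\theta^{2}$, i.e.\ the refined recursion $\Delta^{t+1}\le(1-\mu\alpha)\Delta^{t}+\zeta^{t-t^{0}}-\theta^{2}$ with $\Delta^{t}=\|\overline{\textbf{x}}_{i}^{t}-\textbf{x}^{*}\|^{2}$. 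This $-\theta^{2}$ is precisely the term responsible for the smaller steady-state error of exact MUSIC relative to \eqref{eq35-1}.

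From there the argument is mechanical and identical in form to \eqref{eq38}--\eqref{eq42-3}: unroll $\Delta^{t+1}\le(1-\mu\alpha)\Delta^{t}+(\zeta^{t-t^{0}}-\theta^{2})$ over one epoch $t^{0}+1,\ldots,t^{0}+E$ to obtain $\Delta^{kE}\le(1-\mu\alpha)^{E}\Delta^{(k-1)E}+\sum_{s=0}^{E-1}(\zeta^{E-1-s}-\theta^{2})(1-\mu\alpha)^{s}$; then unroll over $k$ epochs and sum the geometric series in $(1-\mu\alpha)^{E}$ to reach \eqref{eq72}--\eqref{eq72-1}. The asymptotics follow exactly as in Theorem~\ref{them1}: let $k\to\infty$ so that $\varepsilon\to0$ by the standard consensus property, upper-bound $\zeta^{t-t^{0}}$ by its worst case $\zeta^{E-1}$, and use the identity $\limsup_{k\to\infty}D_{2}=\frac{1-(1-\mu\alpha)^{E}}{\mu\alpha}\limsup_{k\to\infty}\zeta^{E-1}$ to telescope the prefactor, leaving the stated $\mathcal{O}\!\big((E-1)^{2}\Gamma(16+4\gamma)+2\alpha\tau-\theta^{2}\big)/(\mu\alpha)$.

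The main obstacle I anticipate is the sign in the middle paragraph: relation \eqref{eq53}, used only through the triangle inequality, would give $\|\overline{\textbf{x}}_{i}^{t+1}-\textbf{x}^{*}\|^{2}\le(\|\overline{\textbf{v}}_{i}^{t+1}-\textbf{x}^{*}\|+\Theta)^{2}$, whose cross term has the wrong sign and would enlarge, not shrink, the error. Extracting the favorable $-\theta^{2}$ instead requires a quantitative version of the exact-diffusion cancellation identity --- essentially that $\overline{\textbf{x}}_{i}^{t^{0}}-\overline{\textbf{v}}_{i}^{t^{0}}$ encodes a fraction $\beta$ of the accumulated gradient bias and is therefore directed back toward $\textbf{x}^{*}$ --- combined with the strict lower bound $\theta$ of Corollary~\ref{corollary1} to keep the gain bounded away from zero. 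Once that inner-product estimate is in place, the remaining steps are routine bookkeeping that parallels the inexact case line by line.
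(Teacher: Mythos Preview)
Your overall plan coincides with the paper's proof: apply Lemma~\ref{lem1} verbatim to the half-step \eqref{eq49}, insert the exact-case deviation bounds of Lemmas~\ref{lem5} and~\ref{lem6} to obtain $\|\overline{\textbf{v}}_{i}^{t+1}-\textbf{x}^{*}\|^{2}\le(1-\mu\alpha)\|\overline{\textbf{x}}_{i}^{t}-\textbf{x}^{*}\|^{2}+\zeta^{t-t^{0}}$ (this is the paper's \eqref{eq74}), convert to a recursion on $\Delta^{t}=\|\overline{\textbf{x}}_{i}^{t}-\textbf{x}^{*}\|^{2}$ with the extra $-\theta^{2}$, and then unroll over one epoch and over $k$ epochs exactly as in \eqref{eq38}--\eqref{eq42-3}. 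The asymptotics you describe (letting $\varepsilon\to0$, replacing $\zeta^{t-t^{0}}$ by $\zeta^{E-1}$, collapsing the geometric prefactor) are the paper's \eqref{eq81}--\eqref{eq83} line for line.

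The only point of divergence is the mechanism for the $-\theta^{2}$ gain. You propose an anti-alignment/inner-product argument showing that the correction vector points back toward $\textbf{x}^{*}$. The paper does \emph{not} do this. It works purely with norms: from $\overline{\textbf{x}}_{i}^{t}=\overline{\textbf{v}}_{i}^{t}+(\overline{\textbf{x}}_{i}^{t^{0}}-\overline{\textbf{v}}_{i}^{t^{0}})$ it writes the reverse triangle inequality
\[
\|\overline{\textbf{v}}_{i}^{t}-\textbf{x}^{*}\|\ \ge\ \|\overline{\textbf{x}}_{i}^{t}-\textbf{x}^{*}\|-\|\overline{\textbf{x}}_{i}^{t^{0}}-\overline{\textbf{v}}_{i}^{t^{0}}\|,
\]
squares both sides, and then drops the cross term $-2\|\overline{\textbf{x}}_{i}^{t}-\textbf{x}^{*}\|\,\|\overline{\textbf{x}}_{i}^{t^{0}}-\overline{\textbf{v}}_{i}^{t^{0}}\|$ to arrive at
\[
\|\overline{\textbf{v}}_{i}^{t}-\textbf{x}^{*}\|^{2}\ \ge\ \|\overline{\textbf{x}}_{i}^{t}-\textbf{x}^{*}\|^{2}+\|\overline{\textbf{x}}_{i}^{t^{0}}-\overline{\textbf{v}}_{i}^{t^{0}}\|^{2}\ \ge\ \|\overline{\textbf{x}}_{i}^{t}-\textbf{x}^{*}\|^{2}+\theta^{2},
\]
using the lower bound \eqref{eq63-1}. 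Substituting this (at time $t+1$) into \eqref{eq74} gives \eqref{eq76} directly. So the paper's derivation of $-\theta^{2}$ is shorter than what you sketch and involves no directional information at all. Your worry in the last paragraph---that the cross term coming from the triangle inequality has the wrong sign for a lower bound---is precisely on target: the paper's passage from the squared reverse triangle inequality to \eqref{eq73-1} discards a nonpositive term on the minorant side, which is the step you flagged as problematic. Nonetheless, that is the argument the paper gives; no anti-alignment estimate appears anywhere in its proof.
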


\begin{proof}

For any iteration $t$ in the exact MUSIC \eqref{eq49}-\eqref{eq50}, no matter whether $t\in\mathcal{I}_{E}$ or $t\notin\mathcal{I}_{E}$, it is true that $\overline{\textbf{x}}_{i}^{t}=\overline{\textbf{v}}_{i}^{t}+\overline{\textbf{x}}_{i}^{t^{0}}-\overline{\textbf{v}}_{i}^{t^{0}}$, which yields
\begin{equation}  \label{eq73}
\begin{aligned}
\|\overline{\textbf{v}}_{i}^{t}-\textbf{x}^{*}\|\geq\|\overline{\textbf{x}}_{i}^{t}-\textbf{x}^{*}\|-\|\overline{\textbf{x}}_{i}^{t^{0}}-\overline{\textbf{v}}_{i}^{t^{0}}\|\\
\end{aligned}
\end{equation}
and
\begin{equation}  \label{eq73-1}
\begin{aligned}
\|\overline{\textbf{v}}_{i}^{t}-\textbf{x}^{*}\|^{2}\geq&\|\overline{\textbf{x}}_{i}^{t}-\textbf{x}^{*}\|^{2}+\|\overline{\textbf{x}}_{i}^{t^{0}}-\overline{\textbf{v}}_{i}^{t^{0}}\|^{2}\\
&-2\|\overline{\textbf{x}}_{i}^{t}-\textbf{x}^{*}\|\|\overline{\textbf{x}}_{i}^{t^{0}}-\overline{\textbf{v}}_{i}^{t^{0}}\|\\
\geq&\|\overline{\textbf{x}}_{i}^{t}-\textbf{x}^{*}\|^{2}+\|\overline{\textbf{x}}_{i}^{t^{0}}-\overline{\textbf{v}}_{i}^{t^{0}}\|^{2}\\
\geq&\|\overline{\textbf{x}}_{i}^{t}-\textbf{x}^{*}\|^{2}+\theta^{2},
\end{aligned}
\end{equation}
where the inequality \eqref{eq63-1} is used.

Thus, following the previous result \eqref{eq13} on one step gradient descent and combining Lemmas \ref{lem5} and \ref{lem6},  for $0\leq t-t^{0}\leq E-1$, we have
\begin{equation}  \label{eq74}
\begin{aligned}
\parallel\overline{\textbf{v}}_{i}^{t+1}-\textbf{x}^{*}\parallel^{2}\leq(1-\mu\alpha)\|\overline{\textbf{x}}_{i}^{t}-\textbf{x}^{*}\|^{2}+ \zeta^{t-t^{0}}.
\end{aligned}
\end{equation}
Substituting \eqref{eq73-1} into above inequality leads to
\begin{equation}  \label{eq76}
\begin{aligned}
\Delta^{t+1}\leq (1-\mu\alpha)\Delta^{t}+\zeta^{t-t^{0}}-\theta^{2},
\end{aligned}
\end{equation}
where we use $\Delta^{t+1}=\parallel\overline{\textbf{x}}_{i}^{t+1}-\textbf{x}^{*}\parallel^{2}$ for convenience.

Iterating \eqref{eq76} and summing up from $t^{0}+1$ to $t^{0}+E$, we get
\begin{equation}  \label{eq77}
\begin{aligned}
\Delta^{t^{0}+E}&\leq(1-\mu\alpha)^{E}\Delta^{t^{0}}+\sum\limits_{s=0}^{E-1}(\zeta^{E-s-1}-\theta^{2})(1-\mu\alpha)^{s},
\end{aligned}
\end{equation}
or, equivalently,
\begin{equation}  \label{eq78}
\begin{aligned}
\big\|\overline{\textbf{x}}_{i}^{kE}-\textbf{x}^{*}\big\|^{2}\leq(1-\mu\alpha)^{E}\big\|\overline{\textbf{x}}_{i}^{(k-1)E}-\textbf{x}^{*}\big\|^{2}+D_{4},
\end{aligned}
\end{equation}
where we use $D_{4}=\sum\limits_{s=0}^{E-1}(\zeta^{E-s-1}-\theta^{2})(1-\mu\alpha)^{s}$ and $k=1,2,\ldots,\lfloor T/E\rfloor$.
Recursive application of the above relation for $k$ times yields
\begin{equation}  \label{eq79}
\begin{aligned}
\big\|\overline{\textbf{x}}_{i}^{kE}-\textbf{x}^{*}\big\|^{2}\leq(1-\mu\alpha)^{kE}\big\|\overline{\textbf{x}}_{i}^{0}-\textbf{x}^{*}\big\|^{2}+D_{3},
\end{aligned}
\end{equation}
where
\begin{equation}  \label{eq80}
\begin{aligned}
D_{3}=\frac{D_{4}(1-(1-\mu\alpha)^{kE})}{1-(1-\mu\alpha)^{E}}.
\end{aligned}
\end{equation}

For sufficiently large $k$, consider $\varepsilon=0$ and a simple case of $E-s-1= E-1$ for $\zeta^{E-s-1}$ in \eqref{eq78} since $\zeta^{E-s-1}$ is monotone increasing, thus, we can write
\begin{equation}  \label{eq81}
\begin{aligned}
\limsup\limits_{k\rightarrow\infty}\zeta^{E-1}=(E-1)^{2}\Gamma(16+4\gamma)+2\alpha\tau,
\end{aligned}
\end{equation}

\begin{equation}  \label{eq82}
\begin{aligned}
\limsup\limits_{k\rightarrow\infty}D_{4}=\frac{1-(1-\mu\alpha)^{E}}{\mu\alpha}(\limsup\limits_{k\rightarrow\infty}\zeta^{E-1}-\theta^{2}),
\end{aligned}
\end{equation}
and
\begin{equation}  \label{eq83}
\begin{aligned}
\limsup\limits_{k\rightarrow\infty}D_{3}=\frac{(E-1)^{2}\Gamma(16+4\gamma)+2\alpha\tau-\theta^{2}}{\mu\alpha}.
\end{aligned}
\end{equation}

\end{proof}

Theorem \ref{them2} shows that exact MUSIC converges linearly to a steady state point as $k\rightarrow\infty$ regardless of network topology since our analysis does not depend on the condition number of network, which is regarded as a parameter affecting convergence in other literature. Furthermore, by defining
\begin{equation}  \label{eq83-1}
\begin{aligned}
\Upsilon^{kE}=\big\|\overline{\textbf{x}}_{i}^{kE}-\textbf{x}^{*}\big\|^{2}-\frac{\zeta^{E-1}-\theta^{2}}{\mu\alpha}
\end{aligned}
\end{equation}
in \eqref{eq72}, an R-Linear convergence rate can be obtained immediately as follows.

\begin{corollary}\label{corollary2} Under the notations and the conditions of Theorem \ref{them2}, the iterations generated by \eqref{eq83-1} converge R-linearly with
\begin{equation}  \label{eq83-2}
\begin{aligned}
\Upsilon^{kE}\leq(1-\mu\alpha)^{kE}\Upsilon^{0}
\end{aligned}
\end{equation}
for all $k=1,2,\ldots,\lfloor T/E\rfloor$.
\end{corollary}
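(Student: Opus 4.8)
The plan is to obtain Corollary~\ref{corollary2} from the one‑round recursion already established inside the proof of Theorem~\ref{them2} — inequality~\eqref{eq77}, equivalently \eqref{eq78} — by the standard ``shift to the fixed point'' trick for a one‑dimensional contracting affine recursion. First I would record that $\zeta^{s}=(4s\sqrt{\Gamma}+\varepsilon)^{2}+4\gamma s^{2}\Gamma+2\alpha\tau$ is nondecreasing in $s\ge 0$: indeed $\Gamma\ge 0$ (forced by Lemma~\ref{lem5}, whose left–hand side is a norm squared), $\gamma=\frac{\alpha(1-2L\alpha)}{\pi}\ge 0$ under $\alpha\le\frac{1}{2L}$, and $\varepsilon,\tau\ge 0$, so all coefficients in $\zeta^{s}$ are nonnegative. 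Consequently each summand of $D_{4}=\sum_{s=0}^{E-1}(\zeta^{E-1-s}-\theta^{2})(1-\mu\alpha)^{s}$ is dominated by $(\zeta^{E-1}-\theta^{2})(1-\mu\alpha)^{s}$, whence
\begin{equation*}
D_{4}\ \le\ \bar D_{4}\ :=\ (\zeta^{E-1}-\theta^{2})\sum_{s=0}^{E-1}(1-\mu\alpha)^{s}\ =\ \frac{(\zeta^{E-1}-\theta^{2})\big(1-(1-\mu\alpha)^{E}\big)}{\mu\alpha}.
\end{equation*}
Plugging this into \eqref{eq78} gives the scalar recursion $\Delta^{kE}\le(1-\mu\alpha)^{E}\Delta^{(k-1)E}+\bar D_{4}$ with $\Delta^{kE}:=\|\overline{\textbf{x}}_{i}^{kE}-\textbf{x}^{*}\|^{2}$, valid for every $k\ge 1$.

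Next I would locate the fixed point of the affine map $y\mapsto(1-\mu\alpha)^{E}y+\bar D_{4}$. Solving $\bar y=(1-\mu\alpha)^{E}\bar y+\bar D_{4}$ gives $\bar y=\frac{\bar D_{4}}{1-(1-\mu\alpha)^{E}}=\frac{\zeta^{E-1}-\theta^{2}}{\mu\alpha}$, which is exactly the constant subtracted off in the definition \eqref{eq83-1} of $\Upsilon^{kE}$, so $\Upsilon^{kE}=\Delta^{kE}-\bar y$. Subtracting $\bar y$ from both sides of the scalar recursion and using the identity $\bar D_{4}-\bar y=-(1-\mu\alpha)^{E}\bar y$, I get
\begin{equation*}
\Upsilon^{kE}\ \le\ (1-\mu\alpha)^{E}\Delta^{(k-1)E}-(1-\mu\alpha)^{E}\bar y\ =\ (1-\mu\alpha)^{E}\,\Upsilon^{(k-1)E}
\end{equation*}
for all $k\ge 1$. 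Iterating this contraction $k$ times (a plain geometric telescoping) yields $\Upsilon^{kE}\le(1-\mu\alpha)^{kE}\Upsilon^{0}$, which is precisely the claimed R‑linear estimate \eqref{eq83-2}.

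I do not expect a genuine obstacle here, since all the heavy lifting (Lemmas~\ref{lem1},~\ref{lem4}--\ref{lem6} and their accumulation into \eqref{eq77}) is already done in Theorem~\ref{them2}; the corollary is essentially a bookkeeping reformulation. The two points needing a line of care are: (i) justifying the monotonicity of $\zeta^{s}$ so that $D_{4}\le\bar D_{4}$, which is what lets us replace the $k$‑dependent offset $\frac{D_{4}}{1-(1-\mu\alpha)^{E}}$ by the clean constant $\frac{\zeta^{E-1}-\theta^{2}}{\mu\alpha}$ without flipping the inequality; and (ii) observing that $0<1-\mu\alpha<1$ (from $\mu\le L$ and $\alpha\le\frac{1}{2L}$), so that $(1-\mu\alpha)^{kE}\to 0$ and the bound is indeed a decaying geometric envelope, i.e.\ an R‑linear rate. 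If one prefers to retain the sharper offset $\frac{D_{4}}{1-(1-\mu\alpha)^{E}}$ instead, the identical shift argument goes through verbatim with $\bar y$ replaced by that quantity; adopting the coarser constant is only to match the statement's definition \eqref{eq83-1}.
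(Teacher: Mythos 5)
Your proposal is correct and is essentially the argument the paper intends: the paper itself invokes the monotonicity of $\zeta^{s}$ to replace the weighted sum by $(\zeta^{E-1}-\theta^{2})\sum_{s}(1-\mu\alpha)^{s}$ (see the passage around \eqref{eq81}--\eqref{eq83}), and Corollary~\ref{corollary2} then follows by exactly your fixed-point shift of the contracting affine recursion \eqref{eq78}. Your two points of care (nonnegativity of $\Gamma$ and $\gamma$ giving monotone $\zeta^{s}$, and $0<1-\mu\alpha<1$) are the right ones and are consistent with the paper's hypotheses.
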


\subsection{Discussion}\label{IIIB}
\subsubsection{Asymptotic error bound}

From \eqref{eq72-1}, the asymptotic error bound for the proposed exact MUSIC algorithm can be split into three terms
\begin{equation}  \label{eq84}
\begin{aligned}
\mathcal{O}\bigg(\underbrace{\frac{(E-1)^{2}\Gamma(16+4\gamma)}{\mu\alpha}}\limits_{\textrm{local\;drift}}+\underbrace{\frac{2\tau}{\mu}}\limits_{\textrm{inexact\;bias}}-\underbrace{\frac{\theta^{2}}{\mu\alpha}}\limits_{\textrm{bias\;correction}}\bigg).
\end{aligned}
\end{equation}
The first term indicates the local drift caused by performing multiple local updates with insufficient corrections, the second term is a constant inexact bias  independent of $E$ and $\alpha$, which is generated by the inexact ATC diffusion method as in \eqref{eq35-1}  due to the existence of  different local optimums at different agents, the third term is a bias correction, which is used to eliminate the influence  of previous local drift and inexact bias. When $E=1$, we obtain the asymptotic error of exact diffusion as
\begin{equation}  \label{eq85}
\begin{aligned}
\mathcal{O}\bigg(\underbrace{\frac{2\tau}{\mu}}\limits_{\textrm{inexact\;bias}}-\underbrace{\frac{\theta^{2}}{\mu\alpha}}\limits_{\textrm{bias\;correction}}\bigg).
\end{aligned}
\end{equation}
Expression \eqref{eq85} reinterprets the intrinsic mechanism of original exact diffusion to improve the convergence performance by performing bias correction. One can also see that an appropriately smaller $\alpha$ will trigger better error compensation. Such interpretation is different from the one presented in the previous works \cite{yuan2018exact,yuan2018exact2}. In comparison, our exact MUSIC inevitably leads to the local drift in order to enhance convergence rate. Therefore, it is possible that there is a trade-off between the convergence rate and  the required steady state accuracy.

\subsubsection{Necessity of local correction}
When we write $\Gamma=(\alpha^{2}G_{max}^{2}+\Theta^{2}-\mu\alpha\beta^{2}\theta^{2})+\alpha(\frac{ G_{max}^{2}}{\mu}-\frac{ G_{min}^{2}}{L})$, it follows that $\Gamma>\alpha^{2}G_{max}^{2}$ due to the facts of $\frac{G_{min}^{2}}{G_{max}^{2}}<1\leq \kappa$ and $\Theta^{2}>\mu\alpha\beta^{2}\theta^{2}$, where $\kappa\triangleq\frac{L}{\mu}$ is known as the condition number of the function $f_{i}$. Hence, comparing with inexact MUSIC, it is inevitable that exact MUSIC has a larger local drift term. However, local correction is indispensable to exact MUSIC. As a matter of fact, we can design a new algorithm without local correction as follows

\begin{align}
\label{eq86} &\textbf{v}^{t+1}_{i}=\textbf{x}^{t}_{i}-\alpha \nabla f_{i}(\textbf{x}^{t}_{i}),\\
\label{eq87} &\textbf{x}_{i}^{t+1}=\begin{cases}
\textbf{v}^{t+1}_{i} &\textrm{if}\;\; t+1\notin \mathcal{I}_{E} \\
\sum\limits_{j\in \mathcal{N}_{i}}\overline{w}_{ij}(\textbf{v}^{t+1}_{j}+\beta(\textbf{x}^{t^{0}}_{j}-\textbf{v}^{t^{0}}_{j}))  &\textrm{if}\;\; t+1\in \mathcal{I}_{E}
\end{cases},
\end{align}
which performs the bias correction only at the combination step.  Algorithm \eqref{eq86}-\eqref{eq87}  can be regarded as  a distributed version of the EASGD \cite{wang2021cooperative} or an intermediate stage combining inexact MUSIC and exact diffusion.

From the proof of Theorems \ref{them1} and \ref{them2}, we can obtain the following recursive inequations for algorithm \eqref{eq86}-\eqref{eq87}
\begin{equation}  \label{eq88}
\begin{aligned}
&\Delta^{t^{0}+1}\leq (1-\mu\alpha)\Delta^{t^{0}}+\xi^{0},\\
&\Delta^{t^{0}+2}\leq (1-\mu\alpha)\Delta^{t^{0}+1}+\xi^{1},\\
&\;\;\;\;\;\;\;\;\;\;\vdots\\
&\Delta^{t^{0}+E-1}\leq (1-\mu\alpha)\Delta^{t^{0}+E-2}+\xi^{E-2},\\
&\Delta^{t^{0}+E}\leq (1-\mu\alpha)\Delta^{t^{0}+E-1}+\xi^{E-1}-\theta^{2},\\
\end{aligned}
\end{equation}
where we use the same definitions for $\Delta$, $\xi$ and $\theta$ given in Theorems \ref{them1} and \ref{them2}, respectively. Consequently,  by following a similar approach as in Theorem \ref{them1}, we omit the proofs  for brevity and obtain roughly the steady-state error
\begin{equation}  \label{eq89}
\begin{aligned}
\mathcal{O}\bigg(\underbrace{\frac{(E-1)^{2} (16+4\gamma)\alpha G_{max}^{2}}{\mu}}\limits_{\textrm{local\;drift}}+\underbrace{\frac{2\tau}{\mu}}\limits_{\textrm{inexact\;bias}}-\underbrace{\frac{\theta^{2}}{1-(1-\mu\alpha)^{E}}}\limits_{\textrm{bias\;correction}}\bigg).
\end{aligned}
\end{equation}

From \eqref{eq89}, the new algorithm has the same local drift and inexact bias as the inexact MUSIC but smaller bias correction than the exact MUSIC due to $1-(1-\mu\alpha)^{E}>\mu\alpha$. With $E>1$, it is therefore that local bias correction is necessary in order to achieve good exact solution.

\subsubsection{Communication complexity}

Let $T_{\epsilon}$ denote the number of required iteration steps for MUSIC to achieve an $\epsilon$ accuracy level. From \eqref{eq72}, it follows that the number of required communication rounds to achieve the target accuracy of $\epsilon$ is $\frac{T_{\epsilon}}{E}=\mathcal{O}(\frac{2\kappa}{E}\log (\frac{1}{\epsilon}))$, which is
reduced by a factor of $\frac{1}{E}$ over $\mathcal{O}(2\kappa\log (\frac{1}{\epsilon}))$ achieved by exact diffusion and better than many exiting algorithms, such as NIDS, AugDGM, NEXT, DIGing. Correspondingly, exact MUSIC has the complexity of gradient evaluation of $\mathcal{O}(2\kappa\log (\frac{1}{\epsilon}))$ due to $E$ local updates per one communication round. The complexity comparison with existing state-of-the-art methods are presented in Table \ref{tab1}, which verifies in theory that our exact MUSIC is communication efficient. Moreover, it should be noted that our topology-independent complexity analysis only requires the connected network without the restriction of specified network topologies.

\subsubsection{Choices of $E$ and $\beta$}  From Corollary 1, $E$ and $\beta$ are required to satisfy the inequality \eqref{eq63-33} to ensure convergence. According to \eqref{eq63-33}, when $\beta=0$ (indicating exact diffusion), we obtain that  $E\geq0$. When $\beta>0$, we can rewrite \eqref{eq63-33} as $E\geq \log_{\nu}\big(1-\frac{\|\textbf{Z}\|}{\frac{1}{\beta}-\frac{\nu}{1-\nu}\|\textbf{Z}-\textbf{I}\|}\big)$ under $\beta<\min\{\frac{1-\nu}{\nu}\|\textbf{Z}-\textbf{I}\|,1\}$. This implies that a large value of $E$ can be selected as long as $\beta$ is sufficiently small. However, it is well-established that a large $E$  can lead to substantial local drift.  Thus, a tradeoff exists between the variables $E$ and $\beta$.  In practice, we select manually the size of $E$ as in the inexact case (e.g., 2, 3, 4) and a large $\beta$ approaching to 1. This selection for a small $E$ is made due to its evident acceleration effect.

\subsection{Numerical Results}\label{IIIC}

\subsubsection{Distributed least squares problem}

We first perform experimental comparison on solving the same least squares problem given as before in \eqref{eq43}. In this subsection, in addition to compare exact MUSIC with the original exact diffusion, we also compare performance with  the linearly convergent algorithms, such as EXTRA \cite{shi2015extra}, DIGing \cite{nedic2017achieving} and three state-of-the-art accelerated benchmarks including ACC-EXTRA \cite{li2020revisiting}, ACC-GT \cite{li2021accelerated} and Acc-DNGD-SC \cite{8812696}. The experimental setup is the same with previous section, except for the step size $\alpha=0.002$ for exact MUSIC and EXTRA.  All other parameters required in accelerated algorithms are hand-optimized to achieve the best performance. In addition, we  test the performance by trying to set $\beta=1$. Note that the problem in this example is ill-conditioned with large condition number by setting $\mu=10^{-6}$ in order to illustrate the algorithmic advantages.

In Fig. \ref{fig4}, one can see that our exact MUSIC converges linearly to the exact solution and achieves an equivalent steady state error as the exact diffusion but with less communication and faster rate. It should be noted that exact MUSIC performs well for $1\leq E\leq4$, but when $E\geq5$ significant divergences are observed. This is mainly because too large quantity $E$ results in the failure of boundedness of bias correction $\|\overline{\textbf{x}}_{i}^{t^{0}}-\overline{\textbf{v}}_{i}^{t^{0}}\|$.  Comparing with the best performance obtained by ACC-EXTRA among those accelerated exact algorithms, our exact MUSIC with limited local iterations (e.g., $E=2,3,4$) shows almost identical steady state accuracy but with fastest convergence rate.

\begin{figure}[htbp]
\centering
\centerline{\includegraphics[width=7cm,height=4.5cm]{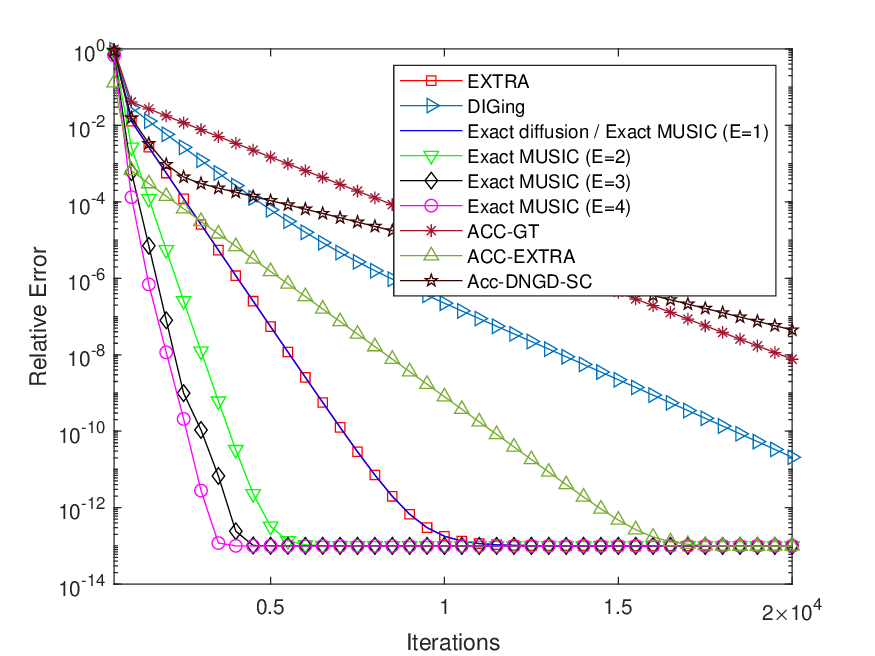}}
\caption{Performance comparisons measured in terms of relative error with respect to iterations on the least squares problem based on synthesized data.}\label{fig4}
\end{figure}

\textbf{Real dataset.}  We report  the results obtained from the ``letter'' dataset provided by  LIBSVM \cite{CC01a}. Here, we selected $10^{4}$ training samples to generate the matrix $\textbf{A}_{i}\in\mathbb{R}^{p\times m}$ with $p=16, N=m=100$ and the vector $b_{i}\in\mathbb{R}^{m}$ corresponding to 26 possible letter labels. The other parameters are consistent with those used in the synthetic data experiment. Similar  to  the synthetic dataset case, Fig. \ref{fig5} shows that our exact MUSIC achieves the best overall performance regardless of $E=2,3$  or 4. Meanwhile, it is also observed that multiple local updates have a negligible impact  on the  final steady state error for solving this least squares problem.

\begin{figure}[htbp]
\centering
\centerline{\includegraphics[width=7cm,height=4.5cm]{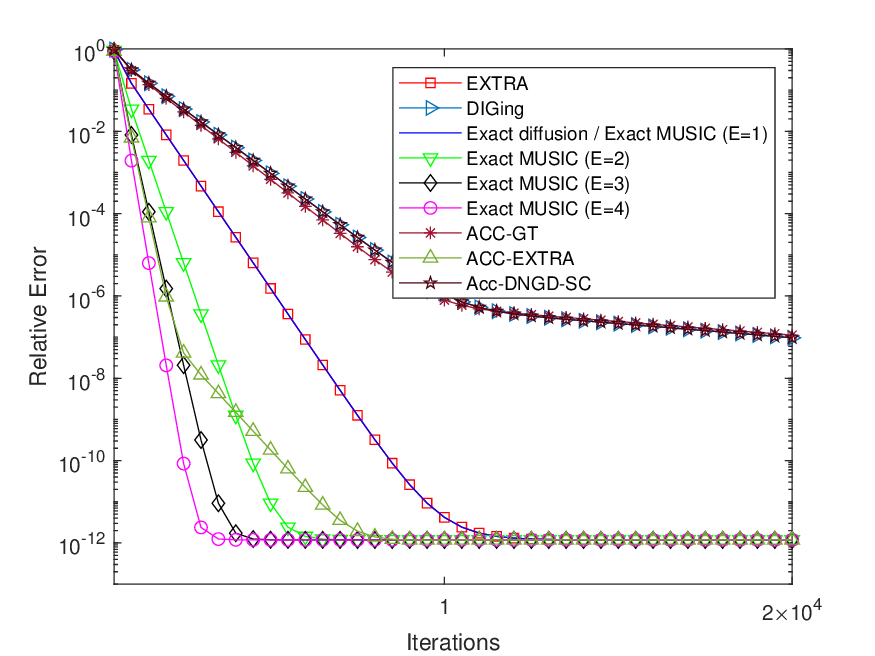}}
\caption{Performance comparisons measured in terms of relative error with respect to iterations on the least squares problem based on real dataset.}\label{fig5}
\end{figure}

\subsubsection{Distributed Logistic Regression}

In this subsection, we test the performance of exact MUSIC by solving a representative  logistic regression learning problem for binary classification, where each agent is associated with a local cost function
\begin{equation}  \label{eq90}
f_{i}(\textbf{x})=\frac{1}{m}\sum\limits_{j=1}^{m}\ln(1+\exp(-\gamma_{i,j}\textbf{h}_{i,j}^{T}\textbf{x}))+\frac{\mu}{2}\|\textbf{x}\|^{2}.
\end{equation}
Here $\{\textbf{h}_{i,j}\in \mathbb{R}^{p}\}$ is the feature vector, and $\gamma_{i,j}\in\{-1,1\}$ is the corresponding label. We still use the ``letter'' dataset and the Erdos-Renyi model with average degree 4 to generate a connected network. We split the ``letter'' datasubset using the second and fourth labels to $N=50$ agents, where each agent receives $m = 30$ training samples of dimension $p=16$. In this problem, since the optimal $\textbf{x}^{*}$ is unknown, we approximate it by running the centralized gradient descent with a very small step size for $2\times10^{5}$ iterations.

\begin{figure}[htbp]
\centering
\centerline{\includegraphics[width=7cm,height=4.5cm]{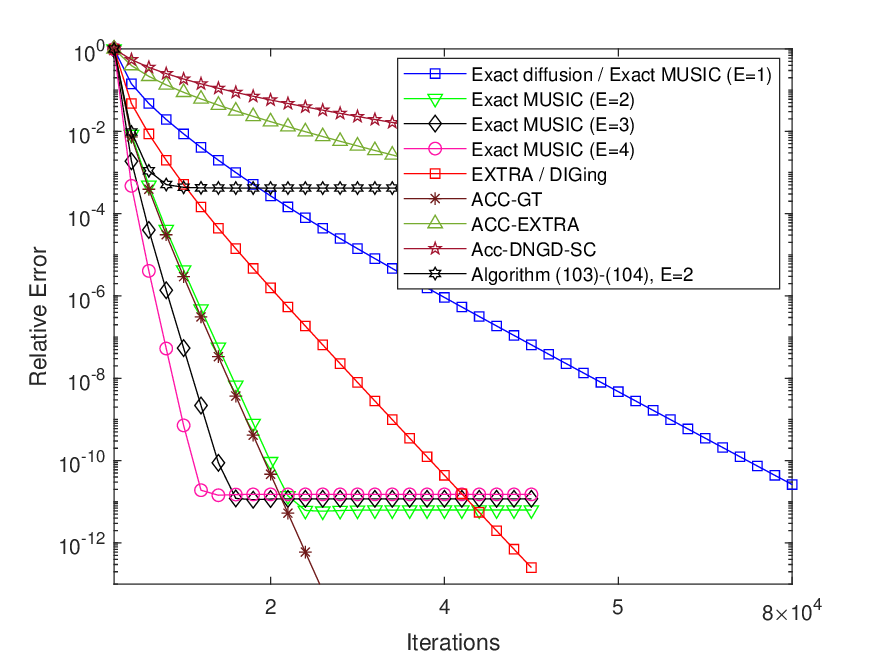}}
\caption{Performance comparisons measured in terms of relative error with respect to iterations on the logistic regression problem based on real dataset.}\label{fig6}
\end{figure}

\begin{figure}[htbp]
\centering
\centerline{\includegraphics[width=7cm,height=4.5cm]{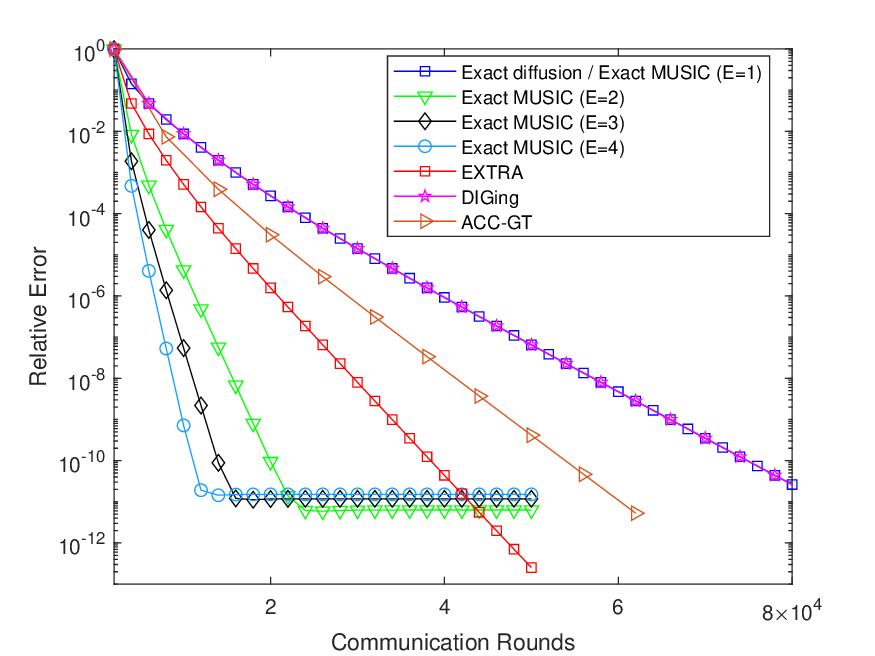}}
\caption{Performance comparisons measured in terms of relative error with respect to number of communications on the logistic regression problem based on real dataset.}\label{fig7}
\end{figure}

From the results shown in Fig. \ref{fig6}, one can see that exact MUSIC enhances significantly converge rate of exact diffusion as the increase of $E$ under the same step size. On the other hand, exact MUSIC achieve the level of high accuracy of $10^{-11}$, which is enough to satisfy the accuracy requirement for the vast majority of learning applications. Without local correction, algorithm \eqref{eq86}-\eqref{eq87} can not converge to a highly accurate solution in spite of the same fast convergence rate as exact MUSIC. This verifies the necessary of local correction as explained theoretically in section \ref{IIIB}.

While the ACC-GT algorithm demonstrates a convergence rate comparable to that of our exact MUSIC and offers higher estimation accuracy, a notable degradation in convergence rate becomes apparent when considering communication costs, as depicted in Fig. \ref{fig7}. In other words, ACC-GT necessitates a greater number of communication rounds than exact MUSIC to achieve the same level of accuracy. This discrepancy primarily arises from the fact that ACC-GT and the DIGing methods require three and two communication rounds per iteration, respectively, whereas our exact MUSIC, along with exact diffusion/EXTRA, only requires one. This observation also underscores the advantage of communication efficiency of our method, which is also verified by theoretically communication complexity. Fig. \ref{fig7} does not show the performance curves of ACC-EXTRA and Acc-DNGD-SC due to  lack of competitiveness  in this example.

Overall, our exact MUSIC proves to be well-suited for a wide range of distributed optimization problems, as it simultaneously targets three key objectives: rapid convergence, efficient communication, and competitive accuracy in exact solutions.

\section{Conclusion and future work}\label{V}

In this paper, we propose an accelerated framework termed the Multi-Updates SIngle-Combination (MUSIC) for first-order distributed optimization.  To our knowledge, MUSIC is the first multiple local updates scheme in deterministic rather than stochastic settings, which can provide a visible acceleration with less communication complexity. To apply MUSIC, we first design the inexact MUSIC method that deploys the traditional ATC method into this framework. Following the success of inexact MUSIC in terms of convergence rate and accuracy, we further develop the exact MUSIC, which has a very different strategy compared with inexact MUSIC. In addition to multiple updates, exact MUSIC employs multiple local bias corrections, thereby converging to the exact solution. Our detailed convergence analysis on inexact and exact MUSIC methods provides the guarantee of linear convergence under mild conditions and the decrease of communication complexity. Future work will focus on further improvement on estimate accuracy under MUSIC framework and the feasibility to develop MUSIC-based second-order methods.

\cite{liu2023decentralized}

\bibliographystyle{IEEEtran}
\bibliography{mybibfile}

\end{document}